\documentclass[12pt]{article}
\usepackage{fullpage}
\usepackage{amsmath}
\usepackage{amssymb}
\usepackage{amsthm}
\usepackage[final]{graphicx}
\usepackage{mdwlist}
\usepackage{cite}
\usepackage{subfigure}

\usepackage{marginnote}
\usepackage{color}

\newtheorem{theorem}{Theorem}[section]
\newtheorem{lemma}[theorem]{Lemma}

\newtheorem{corollary}[theorem]{Corollary}

\theoremstyle{definition}
\newtheorem{definition}[theorem]{Definition}

\theoremstyle{remark}
\newtheorem{example}[theorem]{Example}
\newtheorem{remark}[theorem]{Remark}

\begin{document}

\title{A Game Theoretic Perspective on Network Topologies}
\author{Shaun Lichter, Christopher Griffin, and Terry Friesz}
\date{April 4, 2011}

\maketitle
\begin{abstract}
We extend the results of Goyal and Joshi (S. Goyal and S. Joshi. Networks of collaboration in oligopoly. Games and Economic behavior, 43(1):57-85, 2003), who first considered the problem of collaboration networks of oligopolies and showed that under certain linear assumptions network collaboration produced a stable complete graph through selfish competition. We show with nonlinear cost functions and player payoff alteration that stable collaboration graphs with an arbitrary degree sequence can result. As a by product, we prove a general result on the formation of graphs with arbitrary degree sequences as the result of selfish competition. Simple motivating examples are provided and we discuss a potential relation to Network Science in our conclusions.
\end{abstract}

\section{Introduction}
\label{sec: network formation}
In this paper we model the emergence of collaborations among players (e.g., firms) as a strategic network formation game \cite{dutta1997}, by allowing selfish agents to choose with which other agents they would like to form a link.  Each agent has the option to deny a link to another agent, so the formation of a link requires the cooperation of both players.  A value function assigns a value to each particular graph and this value is distributed to agents by an allocation function (or allocation rule).  This distribution of value drives a player's preference for particular graph structures. The primary objective of this paper is to  extend and generalize a model by Goyal and Joshi \cite{goyal2003} for collaboration of oligopolistic firms to provide a model that, under different conditions, admits a stable collaboration graph with symmetric or asymmetric degree distributions. The primary contribution of this work is showing that nonlinear pricing and collaboration incentives exist to create arbitrary stable network structures. This extends the work of Goyal and Joshi who showed that linear pricing schemes lead to stable graphs that are complete only. Thus the occurrence of exotic graphs (e.g., power law graphs) in natural collaborations may simply be the result of nonlinear incentives.

In particular circumstances a modeler may wish to predict how a network will change if it is perturbed in some way.  Specifically, a network may be changed by a player entering or exiting the game or when a link is added or deleted. For example, Verizon might be interested in knowing if AT\&T and T-Mobile merged, whether the new network would be stable and how the merger might affect the market.  Models that take a statistical in nature provide forecasts for the behavior of large networks, since bulk measures are used, but have difficulty predicting the stability of smaller network of interest.  Unfortunately, the models developed in this paper also have some limitations in that they require some knowledge of the objective functions of players. Further, the models developed in this paper are static models which provide insight into the stability of particular networks, but predictive capabilities may require a dynamic model of objective functions.  These two limitations are discussed further in Section \ref{sec: Future Directions} and are the driving motivation of continued work in this area.  Nonetheless, the models developed in this paper provide a foundation to develop modeling approaches that can predict the reaction of networks to specific changes that may take place.

The research on network formation games is extensive. We summarize a few key results germane to our presentation. Myerson \cite{myerson1977} considers cooperation structures, which we call graphs, but he considers only whether agents are connected, not the structure of the graph of connections.  Nonetheless, Myerson was the first to consider the strategic formation of networks.

Jackson and Wolinsky model social relationships where each player receives benefits at a cost from their friends and a lower benefit from friends of friends, but with no cost.  Jackson and Wolinsky investigate network formation via the stability of particular graph structures \cite{jackson1996}.  An efficient graph structure is one with the maximum value, that is, the total value of all agents is maximized (see Section \ref{sec: Notation} for a mathematical definition).  Jackson and Wolinksy found that efficient graph structures are not always stable, but their analysis used \textit{pairwise stability} (see Section \ref{sec: Notation}), which restricts the capabilities of agents and assumes a symmetric allocation rule.

Dutta and Mutuswami construct an allocation rule that ensures the existence of a strongly stable efficient graph where the allocation rule is symmetric on this subset of strongly stable graphs \cite{dutta1997}. Bala and Goyal \cite{bala2000} consider a model where links allow access to a noncompetitive product such as information. The cost of a link is attributed to the agent initiating the formation of the link.

Furusawa and Konishi \cite{furusawa2007} model free trade networks in which a link between countries represents a free trade agreement and the absence of a link results in a tariff.  The supply and demand functions then induce a value function and allocation rules over the countries.  Belleflamme and Bloch \cite{belleflamme2004} model market sharing between firms where a link  between two firms represents an agreement not to infringe on each other's market and the absence of a link implies the firms will compete in the same market. Again, supply and demand induce a value function and an allocation rule over the firms. Labor markets \cite{calvo2007} and co-author networks \cite{jackson1996} have also been modeled using game theoretic networks.

Networks allow a model to portray intricate relationships among players that cannot otherwise be described.  For example, non-transitive relationships may be modeled as in the case when (e.g.), Player 1 is connected to Players 2 and 3, but Players 2 and 3 are not connected to one another.  This generality leads to another important aspect of a network model: how do relationships of one pair of players affect a third player? In some cases, a positive benefit may be reaped by an outside party; other times, a negative affect may be imposed on an outside party.  Networks allow relationships to be modeled as an externality for outside players, but the nature of the externality is dependent on the application and the model.  Currarini \cite{currarini2002} investigates the nature of externalities in networks by modeling the value of a graph as a function of the components of the graph.  Particularly, any graph is partitioned into one or more components and the value may be a function of the partition.

The remainder of this paper is organized as follows: In Section \ref{sec: Notation} we provide notation used throughout the remainder of this paper. In Section \ref{sec: Beyond Oligopolies} we state and prove a general result on game theoretic constructions of graphs with arbitrary degree sequences. In Section \ref{sec:AppOlig} we apply this result to extend the work of Goyal and Joshi \cite{goyal2003} and provide the main results of the paper. In Section \ref{sec:Conclusion} we provide conclusions and suggest a bridge between this work and recent work in Network Science. Finally in Section \ref{sec: Future Directions} we discuss future directions of this research as well as new research directions that would make this work more applicable to the analysis of real-world organizations.


\section{Notational Preliminaries} \label{sec: Notation}
We follow the notational conventions common in the graph formation literature \cite{jackson1996,dutta1997,jackson2003}.  Let $N=\{1,2, \ldots n\}$ be the set of nodes in a graph, which will represent the players. In the case of oligopolies, a firm is considered a player. The set of links in the graph is a set of pairs of nodes (subsets of $N$ of size two).  A graph $g$ is a set of links (set of subsets of $N$ of size two).  The graph $g^{c}$ is the complete graph (all subsets of $N$ of size two) and $G$ is the set of all graphs over the node set $N$, that is, $G=\{g : g \subseteq g^{c} \}$.  For a graph $g \in G$, the graph $g+ij$ is the graph in which link $ij$ is added to $g$ while $g-ij$ is the graph in which link $ij$ is removed from graph $g$. Let $\eta_{i}(g):G \rightarrow \mathbb{R}$ denote the degree of node $i$ in graph $g$ and $\eta(g):G \rightarrow \mathbb{R}^n$ be the degree sequence of the graph $g$; i.e., $\eta(g)=(\eta_{1}(g),\eta_{2}(g),\ldots, \eta_{n}(g))$.  Let $[g]_{\eta}$ be the equivalence class of graphs with the same degree sequence as $g$.  A degree sequence $\mathbf{d}=\{d_1,\dots,d_n\}$ on $n$ nodes is \textit{graphical}, if there exists a graph $g$ with $n$ nodes that has degree sequence $\mathbf{d}$.  Equivalently, a degree sequence $\mathbf{d}$ is graphical if $\eta^{-1}(\mathbf{d}) \neq \emptyset$.

The value of a graph $g$ is the total value produced by agents in the graph; we denote the value of a graph as the function $v:G \rightarrow \mathbb{R}$ and the set of all such value functions as $V$. A graph $g$ is strongly efficient if $v(g) \geq v(g') \; \; \forall \; g' \in G$.  An allocation rule $Y:V \times G \rightarrow \mathbb{R}^{N}$ distributes the value $v(g)$ among the agents in $g$.  Denote the value allocated to agent $i$ as $Y_{i}(v,g)$.  Since, the allocation rule must distribute the value of the network to all players, it must be \textit{balanced}; i.e., $\sum_{i} Y_{i}(v,g)=v(g)$ for all $(v,g) \in V \times G$.  The allocation rule governs how the value of the graph is distributed and thus plays a significant role in the model.  The allocation rule may be used to model a free market, utilities of players, or system rules such as taxation or the provision of subsidies to particular players.  Throughout this paper, we denote the game $\mathcal{G}= \mathcal{G}(v,Y,N)$ as the game played with value function $v$ and allocation rule $Y$ over nodes $N$.

The strategy set of Player $i$, $S_{i}$, is the set of nodes to which Player $i$ may request to link and $S=\prod_{i \in N}{S_{i}}$ is the strategy space for all players.  The graph $g$ induced by strategy $s \in S$ is $g(s)=\{ij : j \in s_{i}, i \in s_{j}\}$. Here $s_i \in S_i$ is the strategy for Player $i$ in the overall strategy $s$.

The payoff of Player $i$ is defined as $f^{\mathcal{G}}_{i}(s)=Y_{i}(v,g(s))$.  This is interpreted as the payoff that player $i$ receives from the strategy $s$ in game $\mathcal{G}$ and is the proportion of the value $v(g(s))$ distributed by $Y$.  This model admits a Nash equilibrium in which each player chooses to link with no other players and no network forms.  Various authors argue that this is an inadequacy and therefore it is necessary to make refinements to the notion of equilibrium \cite{jackson2003,dutta1997}.

Jackson and Wolinksy use \textit{pairwise stability} to model stable networks without the use of non-cooperative Nash equilibrium \cite{jackson1996}.
\begin{definition}
A network $g$ with value function $v$ and allocation rule $Y$ is pairwise \textit{stable} if (and only if):
\begin{enumerate*}
\item for all $ij \in g$, $Y_{i}(v,g) \geq Y_{i}(v,g - ij)$ and
\item for all $ij \not\in g$, if $Y_{i}(v,g+ij) > Y_{i}(v,g)$, then $Y_{j}(v,g+ij) < Y_{j}(v,g)$
\end{enumerate*}
\end{definition}
Pairwise stability implies that in a stable network, for each link that exists, (1) both players must benefit from it and (2) if a link can provide benefit to both players, then it in fact must exist.  Jackson notes that pairwise stability may be too weak because it does not allow groups of players to add or delete links, only pairs of players \cite{jackson2003}.  Deletion of multiple links simultaneously has been considered in \cite{belleflamme2004}.

Goyal and Joshi \cite{goyal2006} provide a model that allows a player's allocation to be dependent on the number of links the player has.  That is:
\begin{equation}
Y_{i}(g)=\pi_{i}(g)-\gamma\eta_{i}(g)
\end{equation}
Here $\pi_{i}$ is a profit function, $\eta_{i}(g)$ is the number of links player $i$ has in $g$, and $\gamma$ is a link maintenance cost.  However, $\gamma$ may be negative to model a benefit of having more links.  The value function is then defined implicitly so that $Y_{i}(g)$ is a balanced allocation rule.

\section{A General Result on Graph Stability}
\label{sec: Beyond Oligopolies}
In this section, we show how to construct an allocation rule $Y_{i}(g)$ to ensure that a graph with a given degree sequence is stable.  To do this, we present a set of properties for the objectives of players that result in a stable graph with an arbitrary degree sequence.  Recall, a stable network is one in which no player has an incentive to drop a link or request an additional link to a player who reciprocates such an incentive.  Consider the scenario where for $i = 1,\dots,n$, Player $i$ has a desired degree $k_i$.  For example, some scientific authors may like to collaborate with only a few individuals, while some like to collaborate with many others.  Ultimately, when a collaboration network is formed, each player $i$ receives the number of links given by $\eta_{i}(g)$. In the game where each player's objective penalizes them for incurring a degree $\eta_{i}(g)$ not equal to their desired degree $k_i$ (under some conditions), the graph with a degree distribution of $k=(k_1,k_2,\ldots,k_n)$ will be stable.

With a slight abuse of the notation from Section \ref{sec: Notation}, we first define an allocation (payoff) to each player $i$ as $Y_{i}:G \rightarrow \mathbb{R}$ which is consistent with an allocation function  $Y:G \times V \rightarrow \mathbb{R}$ and an induced value function $V:G \rightarrow \mathbb{R}$ with $v(g)=\sum_{i}{Y_{i}(g)}$.

\begin{theorem}
Let $\mathbf{d} = (k_1,\dots,k_n)$ be a desired degree sequence for $n$ players in the node set $N$. Assume that Player $i$ wishes to maximize
objective $Y_{i}(g)=-f_{i}(\eta_{i}(g))=-f(\eta_{i}(g)-k_{i})$ (or minimize $f_{i}(\eta_{i}(g))=f(\eta_{i}(g)-k_{i})$), where $f:\mathbb{R} \rightarrow \mathbb{R}$ is a convex function with minimum at $0$.  Let $v$ be the balanced value function induced from the allocation rule $Y = (Y_1,\dots,Y_n)$. If $\eta^{-1}(\mathbf{d})$ is non-empty (i.e., $\mathbf{d}$ is graphic) then any graph $g$ such that $\eta(g) = \mathbf{d}$ is pairwise stable for the the game $\mathcal{G}(v,Y,N)$.
\label{thm: general arbitrary dist}
\end{theorem}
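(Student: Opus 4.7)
The plan is to verify the two clauses of pairwise stability directly from the shape of the payoff. Fix a graph $g$ with $\eta(g) = \mathbf{d}$, so that $\eta_i(g) = k_i$ for every player $i$. Then the hypothesis on $f$ (minimum at $0$) gives $Y_i(g) = -f(0) \geq -f(x)$ for every real $x$; in particular, $-f(0)$ is the largest possible value of player $i$'s objective, regardless of what graph is on the table. This observation does almost all of the work.

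For clause (1), I would consider any $ij \in g$ and note that in $g - ij$ the degree of $i$ becomes $k_i - 1$, so $Y_i(g - ij) = -f(-1)$. Because $0$ is a minimizer of $f$, we have $f(-1) \geq f(0)$, hence $Y_i(g - ij) \leq Y_i(g)$, which is exactly the no-deletion-incentive condition. For clause (2), I would take any $ij \notin g$ and compute $Y_i(g + ij) = -f(1) \leq -f(0) = Y_i(g)$ (and symmetrically for $j$). Thus the antecedent $Y_i(v, g+ij) > Y_i(v, g)$ is false, so the implication is vacuously true. That dispatches pairwise stability.

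The only thing remaining is to confirm that the construction is well-defined, namely that the proposed $Y = (Y_1, \dots, Y_n)$ really is an allocation rule. Here I would simply invoke the setup given just before the theorem: the induced value function $v(g) := \sum_i Y_i(g)$ makes $Y$ balanced by definition, so the game $\mathcal{G}(v, Y, N)$ is legitimate. Graphicality of $\mathbf{d}$ guarantees the target set $\eta^{-1}(\mathbf{d})$ is nonempty, so some such $g$ exists to be called stable.

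The ``main obstacle'' is really an absence of obstacles rather than a difficulty: the proof is almost immediate once one notices that the payoff is globally maximized at $\eta_i(g) = k_i$. I would flag in a remark that convexity of $f$ is in fact \emph{stronger} than needed for pairwise stability, where only the single-minimum property matters; convexity becomes genuinely useful for ruling out coalitional deviations or simultaneous multi-link changes (e.g., strong stability in the sense of \cite{dutta1997}), where one must compare $f$ at $\pm m$ for $m > 1$ and convexity ensures the monotone departure from the minimum along each side.
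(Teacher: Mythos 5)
Your proof is correct and follows essentially the same route as the paper's: fix $g$ with $\eta_i(g)=k_i$, observe that $-f(0)$ is the global maximum of each player's payoff, and check the two pairwise-stability clauses from the unit degree changes under $g-ij$ and $g+ij$. If anything, your version is slightly more careful than the paper's, which asserts strict inequalities $Y_i(k_i\pm 1)-Y_i(k_i)<0$ (not guaranteed if $f$ is flat near its minimum), whereas your weak inequalities plus the vacuous-implication reading of clause (2) suffice; your observation that only the minimum-at-zero property (not full convexity) is needed here is also accurate.
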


\begin{proof}
Let $g \in \eta^{-1}(\mathbf{d})$ be a graph so that $\eta_{i}(g)=k_{i}$ and let $g-ij$ be the graph $g$ after the link $ij$ is dropped from $g$ and define $g+ij$ to be the graph $g$ after the link $ij$ is added to graph $g$.  Observe that:
\begin{gather*}
\eta_{i}(g-ij)=k_{i}-1,\quad
\eta_{j}(g-ij)=k_{j}-1,\quad
\eta_{l}(g-ij)=k_{l}, \\
\eta_{i}(g+ij)=k_{i}+1,\quad
\eta_{j}(g+ij)=k_{j}+1,\quad
\eta_{l}(g+ij)=k_{l}
\end{gather*}
where $l \not\in \{i,j\}$

Since $f$ is convex with minimum at $0$ and each $f_{i}$ are simply shifts of $f$ so that they are minimized at $k_{i}$, we know that $f_{i}(k_{i})$ is a minimum of the function $f_i$.  Thus $Y_{i}(g)$ is maximized when  $g \in \eta^{-1}(\mathbf{d})$. It now suffices to confirm pairwise stability:
\begin{gather*}
Y_{i} (\eta_{i}(g-ij))-Y_{i} (\eta_{i}(g)) =Y_{i}(k_{i}-1)-Y_{i}(k_{i})<0\\
Y_{i} (\eta_{i}(g+ij))-Y_{i} (\eta_{i}(g)) =Y_{i}(k_{i}+1)-Y_{i}(k_{i})<0
\end{gather*}
Thus graph $g \in \eta(\mathbf{d})$ is stable because there is no firm $i$ willing to drop an arbitrary existing link $ij$ or add an arbitrary missing link $ij$.
\end{proof}

\begin{remark}
From Theorem \ref{thm: general arbitrary dist} it is interesting to note that the stability of the class $\eta^{-1}(\mathbf{d})$ is not unique. We can illustrate this fact using the following counterexample. Let $\mathbf{d} = (1,1,1,2,3)$ with $N = \{1,\dots,5\}$ and let $f(x) = x^2$, a simple convex function with minimum at zero, from the theorem. The following two networks are both pairwise stable solutions to the game $\mathcal{G} = (v,Y,N)$:
\begin{figure}[htbp]
\centering
\includegraphics[scale=0.65]{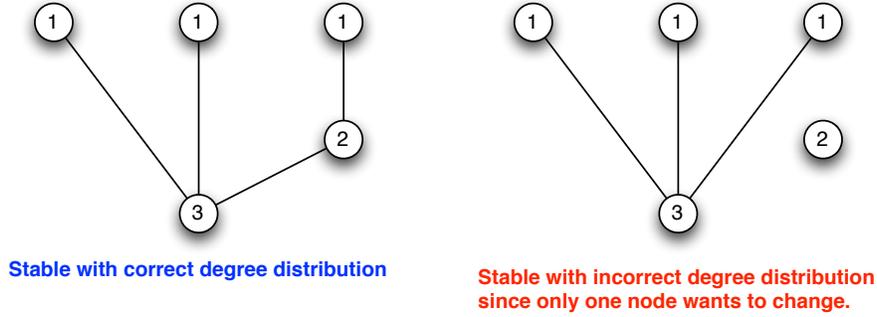}
\caption{We illustrate two graphs that are both pairwise stable for degree sequence $\mathbf{d} = (1,1,1,2,3)$ with $f(x) = x^2$ in Theorem \ref{thm: general arbitrary dist}}.
\label{fig:CounterExample}
\end{figure}
It should be clear however, that the second graph is not a \textit{Pareto optimal} solution to the problem. Pareto optimality of the class of graphs $\eta^{-1}(\mathbf{d})$ follows \textit{a posteriori} from the proof of Theorem \ref{thm: general arbitrary dist}.
\end{remark}
\begin{corollary} Assume the conditions from Theorem \ref{thm: general arbitrary dist}. A graph $g$ is a pairwise stable and Pareto optimal solution to the game $\mathcal{G}(v,Y,N)$ if and only if $\eta(g) = \mathbf{d}$.
\end{corollary}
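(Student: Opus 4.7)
The plan is to prove both directions by exploiting the structural fact underlying Theorem \ref{thm: general arbitrary dist}: each payoff $Y_i(g) = -f(\eta_i(g) - k_i)$ depends only on $\eta_i(g)$ and attains its global maximum value $-f(0)$ exactly when $\eta_i(g) = k_i$. In particular, for any graph $h \in \eta^{-1}(\mathbf{d})$, every player simultaneously sits at the global maximum of their own individual payoff function.

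For the ($\Leftarrow$) direction, I would assume $\eta(g) = \mathbf{d}$. Pairwise stability is exactly Theorem \ref{thm: general arbitrary dist}. For Pareto optimality, since each $Y_i(g) = -f(0)$ is already at the global maximum, no alternative graph $g'$ can deliver $Y_i(g') > Y_i(g)$ for any player $i$, and \emph{a fortiori} $g$ cannot be Pareto-dominated.

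For the ($\Rightarrow$) direction I would argue contrapositively: assume $\eta(g) \neq \mathbf{d}$ and produce a graph Pareto-dominating $g$. Because $\mathbf{d}$ is graphic, pick any $h \in \eta^{-1}(\mathbf{d})$. The global-maximum property yields $Y_i(h) \geq Y_i(g)$ for every $i$, and for any player $j$ with $\eta_j(g) \neq k_j$ the inequality must be strict, so $h$ Pareto-dominates $g$. Notice that the pairwise-stability hypothesis is actually redundant in this direction: Pareto optimality alone forces $\eta(g) = \mathbf{d}$.

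The main obstacle is securing the strict inequality $Y_j(h) > Y_j(g)$ used in the reverse direction. This requires $f$ to attain its minimum \emph{uniquely} at $0$, rather than merely having a minimum there; strict convexity of $f$ (or, more weakly, strict monotonicity on each side of $0$) is a clean sufficient condition. Without such non-degeneracy one can engineer a convex $f$ that is flat near $0$ so that off-target degree sequences tie for optimal and the corollary fails. I would therefore either read this non-degeneracy into the hypotheses inherited from Theorem \ref{thm: general arbitrary dist} or state it explicitly as part of the corollary.
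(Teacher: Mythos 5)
Your proof is correct and follows essentially the same route as the paper: the ($\Leftarrow$) direction from the fact that every player simultaneously attains the global maximum $-f(0)$ on any graph in $\eta^{-1}(\mathbf{d})$, and the ($\Rightarrow$) direction by exhibiting a Pareto-dominating graph drawn from $\eta^{-1}(\mathbf{d})$. Your observation that the strict domination step needs $f(\pm 1)>f(0)$ (a minimum at $0$ that is strict, not merely attained) is a genuine point the paper leaves implicit --- its own proof of Theorem \ref{thm: general arbitrary dist} already asserts the strict inequalities $Y_i(k_i\pm 1)-Y_i(k_i)<0$ without stating that hypothesis --- so your suggestion to make the non-degeneracy explicit is the right call.
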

\begin{proof} The fact that $\mathbf{d}$ is graphic implies that there is some graph $g \in \eta^{-1}(g)$ and furthermore $Y_i(g)$ is maximized for all $i = 1,\dots,n$. Therefore any change in strategy cannot improve any player's current payoff. Thus, $g$ is Pareto optimal and pairwise stable.

If $g' \not \in \eta^{-1}(\mathbf{d})$ then while $g'$ may be pairwise stable, it cannot be Pareto optimal since there is some other graph $g \in \eta^{-1}(\mathbf{d})$ so that $Y_i(g) > Y_i(g')$ for some $i \in N$.
\end{proof}

\begin{example}
Suppose that we want the degree distribution of a stable graph that results from playing the game described in Theorem \ref{thm: general arbitrary dist} to have a power law degree distribution.  We embed this into the objectives of the players, so the resulting graph has the proper distribution. Let $N=100$ players attempt to minimize their cost function
\begin{displaymath}
f(\eta_{i}(g))=(\eta_{i}(g) - k_{i})^{2}+\psi
\end{displaymath}
where $\psi=2$ and the parameters of each player ($k_i$) are given in the following table:
\begin{center}
\begin{tabular}{|c|c|}
  \hline
Node(s) & $k_{i}$ \\
\hline
1-75 & 1\\
\hline
76-89 & 2\\
\hline
90-94 & 3\\
\hline
95-96 & 4\\
  \hline
97 & 5\\
\hline
98 & 6\\
\hline
99 & 7\\
\hline
100 & 8\\
\hline
\end{tabular}
\end{center}
The distribution of values of $k_{i}$ forms an approximate (with rounding to integers) power law distribution.  This is illustrated in Figure \ref{fig:subfig2}:
\begin{figure}[htbp]
\centering
\includegraphics[scale=0.33]{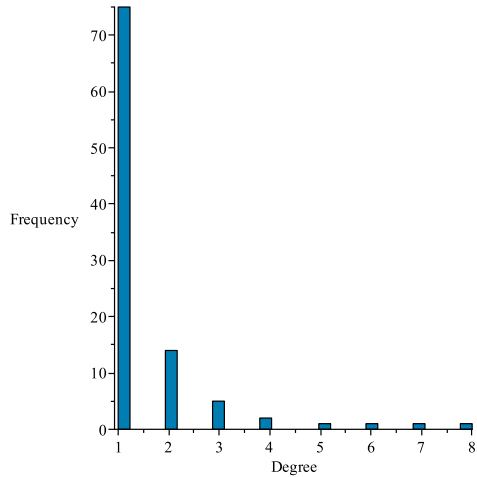}
\caption{The empirical desired distribution of the degrees of the players. This degree distribution follows an approximate power law distribution.}
\label{fig:subfig2}
\end{figure}

Figure \ref{fig:X2} illustrates one realization of a power law graph with the desired degree distribution.
\begin{figure}[ht]
\centering
\includegraphics[scale=0.25]{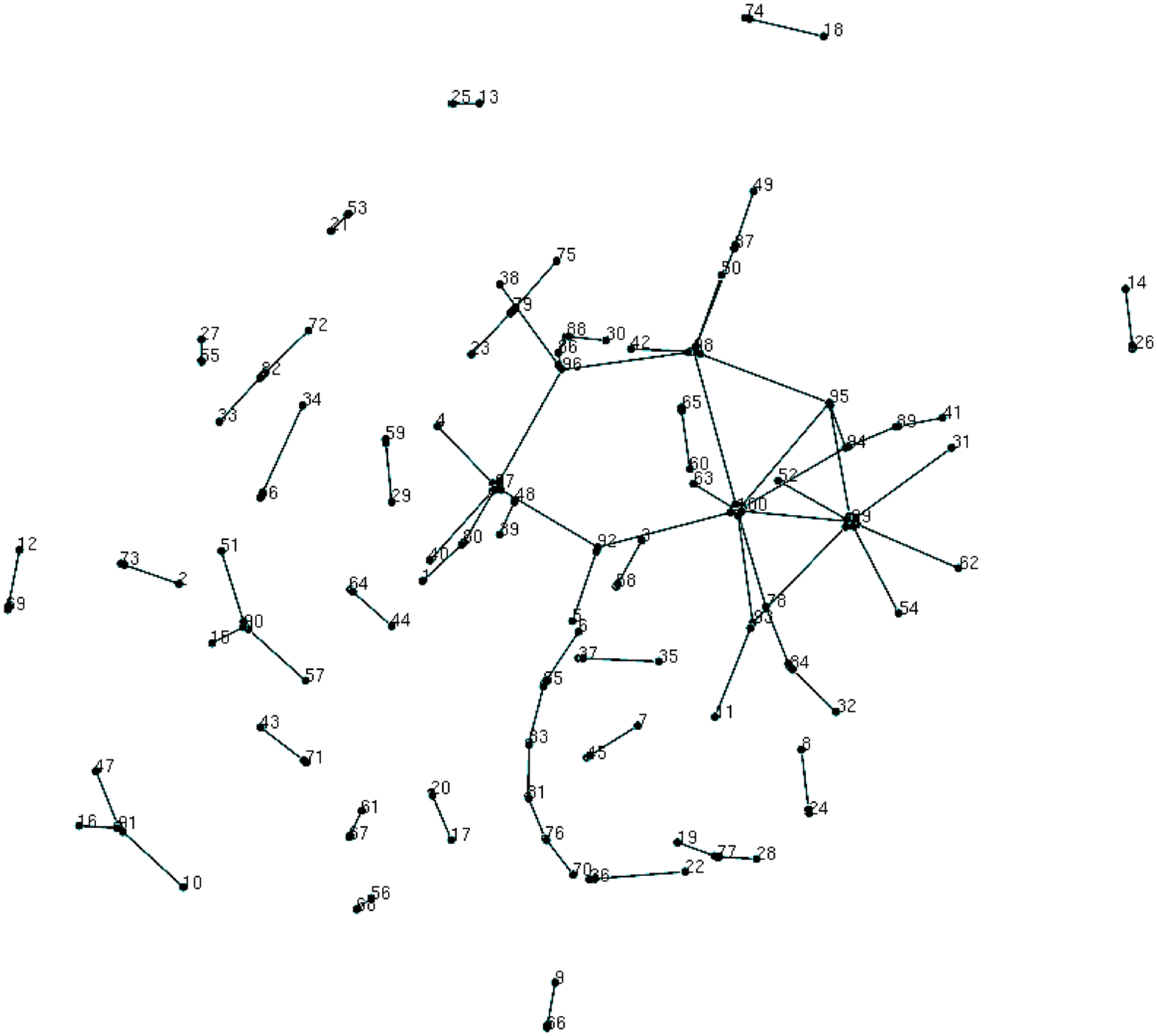}
\caption{Sample pairwise stable collaboration Network for 100 nodes.}
\label{fig:X2}
\end{figure}
We can test the stability of any graph $g$ with $\eta_{i}(g)=k_{i}$. In doing so, we observe that any graph with this distribution must be both stable (since there is no motivation for any player to alter her configuration within the network) and Pareto optimal.

We can experimentally analyze this simple game to determine the nature of the average network produced during play. For a simple experiment, 100 pairwise stable graphs were generated and some of their properties analyzed. To build these pairwise stable graphs, we used the following procedure:
\begin{enumerate*}
\item An empty graph was initialized.
\item Two vertices that had degree less than their desired degree were chosen at random without replacement.
\item These two vertices were joined by an edge.
\item The graph was checked for pairwise stability. If the graph was pairwise stable, the graph was returned. Otherwise, we continued at Step 2.
\end{enumerate*}
Results are shown in Figure \ref{fig:ExperimentalResults}.
\begin{figure}[htbp]
\centering
\subfigure[]{\includegraphics[scale=0.25]{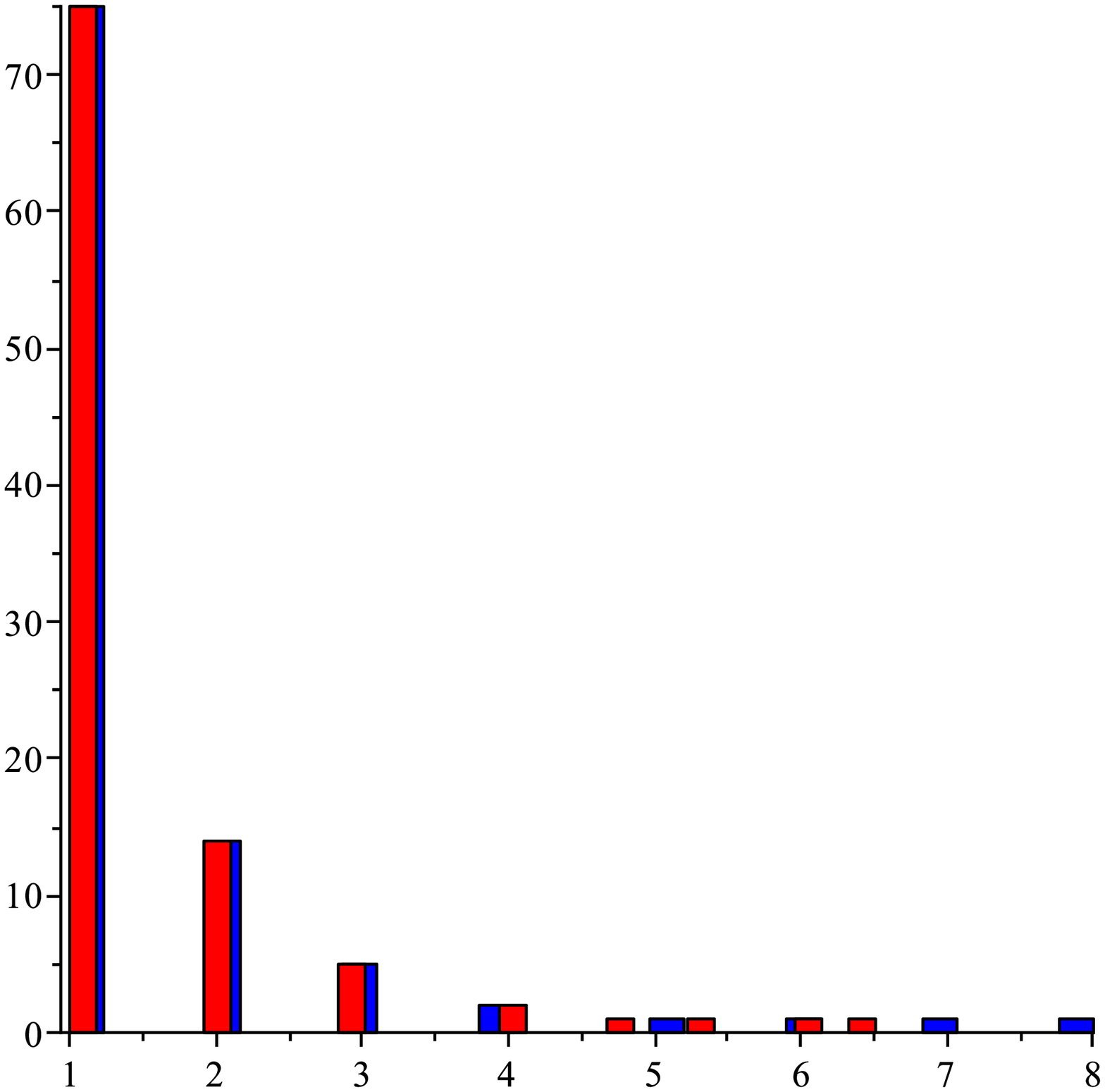}}
\subfigure[]{\includegraphics[scale=0.25]{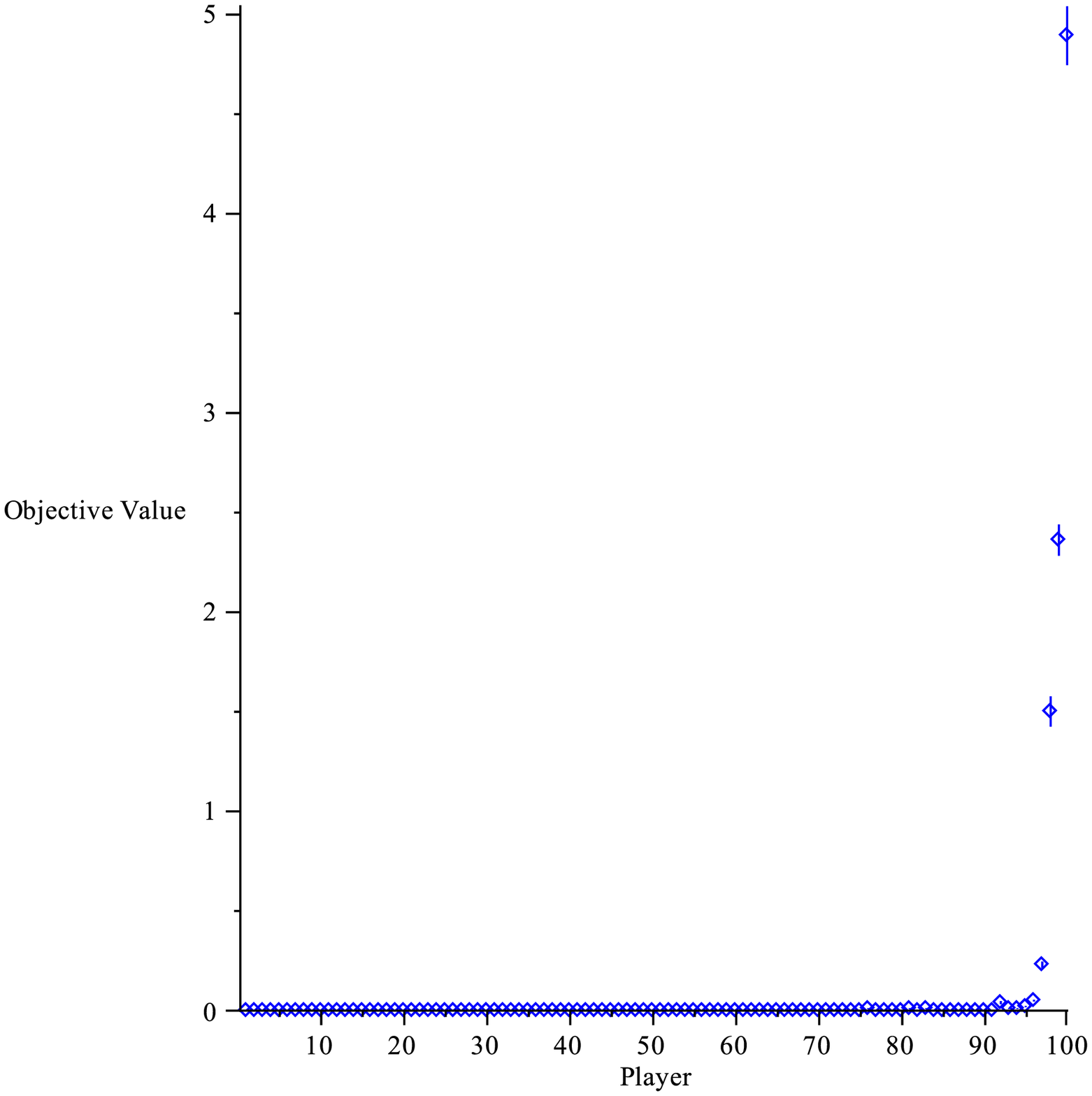}}
\caption{Experimental results from 100 pairwise stable graphs that result when the given degree distribution is used with the objective $f(\eta_{i}(g))=(\eta_{i}(g) - k_{i})^{2}+\psi$.}
\label{fig:ExperimentalResults}
\end{figure}
The left histogram shows the similarity between the degree specified degree distribution and the mean degree distribution of the ensemble of graphs generated. A more interesting plot is the mean objective function value on a per-player basis (with error bars). This graph indicates that most players successfully minimize their objective value (to zero) and that the players with the worst ability to optimize their objective functions are players with high degree requirements. It is interesting to note that these results are a function of the routine used to compute pairwise stable graphs. We also tried using a modified version of the preferential attachment algorithm:
\begin{enumerate*}
\item An empty graph was initialized.
\item Two vertices that had degree less than their desired degree and with highest desired degree where chosen at random.
\item These two vertices were joined by an edge.
\item The graph was checked for pairwise stability. If the graph was pairwise stable, the graph was returned. Otherwise, we continued at Step 2.
\end{enumerate*}
Using this algorithm, in 100 simulation runs a graph with the proscribed degree sequence was \textit{always} returned\footnote{Maple code for these experiments is available upon request from the authors.}.
\end{example}

%

\section{Application to Collaboration in Oligopoly}\label{sec:AppOlig}
We present an application of the network formation game to firm collaboration in oligopolies, which is an extension to the firm collaboration presented by Goyal and Joshi in \cite{goyal2003}. We show the conditions under which a firm collaboration network with an arbitrary degree sequence will be stable.

The number of firms participating in collaborative agreements (e.g., sharing resources such as equipment, laboratory space, office space, engineers and scientists through separate R \& D subcompanies) has significantly increased  within industries that are R \& D intensive \cite{hagedoorn1990,hagedoorn1993,hagedoorn1994,hagedoorn1996,hagedoorn2000,hagedoorn2002}, sparking research investigating the incentives for such collaboration.

Goyal and Joshi examine the incentives for collaboration and the interaction of these incentives under market competition using a model of horizontal oligopolistic firm collaboration. In this model, firms compete in the market after choosing collaborators \cite{goyal2003}. Goyal and Joshi provide theoretical analysis on various models with varying levels of link formation costs relative to production costs. They assume that collaboration requires a fixed cost from each firm.  These models investigate collaboration agreements where the collaboration reduces the cost of production. This prior work assumes a constant return to scale cost function, which they admit is a restrictive assumption, although it has been made by others in the collaborative R \& D literature \cite{bloch1995}.

\subsection{Previous Results on Network Stability in Oligopoly}
Following \cite{goyal2003}, consider an oligopoly composed of $n$ firms, each of whom may collaborate with any of the other $n-1$ firms. Firm $i$ produces a quantity $q_i$ of a product. Collaboration among firms affects the marginal cost of production. Thus a particular (collaboration) graph $g$ induces a marginal cost vector $c(g) \in \mathbb{R}^n$ in which $c_{i}(g)$ is the marginal cost of firm $i$ under collaboration graph $g$. It is assumed that the marginal cost decreases for a firm $i$ as the number of collaborators increase.  For many of the models in \cite{goyal2003}, it is assumed that the marginal cost of firm $i$ linearly decreases with the number of collaborators for firm $i$:
\begin{equation}\label{eq: marginal cost}
c_{i}(g)=\gamma_{0}-\gamma \eta_{i}(g)
\end{equation}
where, as before, $\eta_{i}(g)$ is the number of links for firm $i$ and $\gamma_{0}$ is the marginal cost of production when a firm has no links.  Notice that $\gamma_{0}$ is constant for all firms.  Given a network $g$, there is an induced set of costs which, along with the demand functions, produces a set of profit functions for each firm, $Y_{i}(g)$ (the allocation of payoff for player $i$).  These profit functions then induce a Nash equilibrium of production, which provides the precise allocation rule (i.e., profit) for each firm on the graph.  The stability of the collaboration network can then be analyzed as in Section \ref{sec: Beyond Oligopolies}.

One example that Goyal and Joshi \cite{goyal2003} study is that of a homogenous product oligopoly.  The market marginal price function (dependent on quantity produced) is given by
\begin{equation}\label{eq: market demand}
    P(Q) = \alpha - \sum_{i \in N} q_{i}
\end{equation}
where $Q = \sum_{i \in N} q_i$. Using Equation \ref{eq: marginal cost}, the resulting profit to Player $i$ is:
\begin{equation}
Y_i(g) =
\left(\alpha - \sum_{i \in N} q_{i}\right) q_i -
\left(\gamma_{0}-\gamma \eta_{i}(g)\right)q_i =
(\alpha - \gamma_0)q_i - \left(\sum_{i \in N} q_{i}\right)q_i -
\left(-\gamma \eta_{i}(g)\right)q_i
\label{eqn:Profit1}
\end{equation}

Given a collaboration network $g$ and marginal cost and demand induced by network $g$ we can use Expression \ref{eqn:Profit1} to find the quantity produced by each firm at Cournot equilibrium using the standard Cournot oligopoly production formulation \cite{Tir88, goyal2003}:
\begin{equation}\label{eq: quantity produced}
q_{i} = \frac{\alpha - \gamma_{0}+n \gamma \eta_{i}(g)- \gamma \sum_{j \neq i} \eta_{j}(g)}{n+1}
\end{equation}
The fact that $q_i$ is a function of the collaboration graph $g$ implies that the quantities: $P$, $Q$ and $c_i$ are all functions of $g$. We will refer to these quantities in this way in the sequel.

There is no natural market force in this model to ensure that a firm does not produce a negative quantity.  Instead, Goyal and Joshi restrict the parameters by looking at the case where firm $i$ has no collaborators ($\eta_{i}(g)=0$) and all others have a maximum number of collaborators ($\eta_{j}(g)=n-2$ for all $j \neq i$) and forcing the quantity production to remain non-negative; i.e.:
\begin{equation}
\alpha - \gamma_{0}- \gamma (n-1) (n-2) \geq 0
\end{equation}
Goyal and Joshi show that with marginal cost (\ref{eq: marginal cost}) and market demand (\ref{eq: market demand}), the complete network is the unique stable network \cite{goyal2003}. They point out that the positive contribution to profit obtained by a collaboration link is on the order of $n\gamma$ and the negative contribution to profit is on the order of $\gamma$, so the net profit is on the order of $(n-1)\gamma$, which leads to a firm's incentive to saturate all of its possible links, thus making the complete graph stable.

\begin{example} We present a numerical example of the result from \cite{goyal2003}. Let $N=5$ firms compete in an oligopoly with inverse demand function $P=100-Q$, fixed cost $\gamma_{0}=5$, and $\gamma = 1$. Then we can see that:
\begin{equation}
\alpha - \gamma_{0}- \gamma (n-1) (n-2) = 100 - 5 - (4)(3) = 95 - 12 = 83 > 0
\end{equation}
This enforces the non-negativity of production. Since the marginal cost of Player $i$ ($i = 1,\dots,5$) decreases as its degree increases (from Equation \ref{eq: marginal cost}) it is easy to see that players will prefer as many links as possible. Thus, the complete graph must be stable.
\end{example}

\begin{remark} The fundamental limitation to Goyal and Joshi's work lies in the model complexity. Maintaining collaborations is costly and each additional collaboration may contribute less to the marginal cost than the collaboration before it. Additionally, Goyal and Joshi's model admits only complete collaboration networks, omitting any other type of collaboration network. The remainder of the paper illustrates how to generalize this result to graphs with arbitrary degree sequences.
\end{remark}

\subsection{Results of Nonlinear Cost on Stability}
In this section we study the effect a nonlinear variation on the marginal cost function has on the stability of collaboration structures. We consider a marginal cost function:
\begin{equation}
c_i(g) = \gamma_0 + f_i(\eta_i(g))
\label{eqn:MarginalCostf}
\end{equation}
where $f_{i}$ is some function $f_{i}:\mathbb{R} \rightarrow \mathbb{R}$. We present two Theorems:
\begin{enumerate*}
\item  Theorem \ref{thm: negative dv dt complete graph} extends the results from Goyal and Joshi \cite{goyal2003} to the case where firms receive decreasing marginal benefit from collaboration via a nonlinear decrease in production cost.  This extension allows the parameter space to be considerably less restricted than the result in \cite{goyal2003}.

\item Theorem \ref{thm: stable assym k distn graph} shows that if the functions $f_i$ have minima and the parameter space is restricted to keep production quantities nonnegative, then the collaboration network in the oligopolistic competition model will have a graph corresponding to the arbitrary degree sequence in the prior section.
\end{enumerate*}
As a result, this is a particular application of Theorem \ref{thm: general arbitrary dist}, to a  general collaboration game. To prove our theorems, we alter the assumptions on the functions $f_i$ from the marginal cost (Expression \ref{eqn:MarginalCostf}).  Theorem \ref{thm: stable assym k distn graph} requires convex functions $f_i$ where as Theorem \ref{thm: negative dv dt complete graph} requires $f_i$ to be a decreasing and convex function.

\begin{lemma}
\label{lem: qi}
Suppose we have an oligopoly consisting of $n$ firms in which collaboration is defined by the graph $g$ and the profit function (allocation rule) for Firm $i$ in that oligopoly is given by:
\begin{equation}
Y_{i}(g,q(g)) = (\alpha - \gamma_0)q_{i}(g) - \left(\sum_{j \in N} q_{j}\right)q_i(g) - (f_{i}(\eta_{i}(g)))q_i(g)
\label{eqn: profit oligopoly}
\end{equation}
then the quantity produced for firm $i$ is:
\begin{equation}
\label{eq: quantity produced app1}
q_{i}(g) = \frac{\alpha - \gamma_{0}-n f_{i}(\eta_{i}(g))+ \sum_{j \neq i}f_{j}(\eta_{j}(g)) }{n+1}
\end{equation}
\end{lemma}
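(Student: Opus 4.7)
The plan is to recognize this as a standard Cournot best-response calculation, adapted to the situation where each firm $i$ faces an idiosyncratic (graph-dependent) marginal cost $\gamma_0 + f_i(\eta_i(g))$. Since $g$ is held fixed throughout, I will suppress the $g$-dependence of $\eta_i$ and $f_i$ and simply solve for the Nash equilibrium of the $n$-player production game with profits given by Equation~\eqref{eqn: profit oligopoly}.

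First, I would compute the first-order condition for firm $i$ with respect to its own choice variable $q_i$, treating $q_j$ for $j \neq i$ as fixed. Differentiating the expression $(\alpha-\gamma_0)q_i - q_i\sum_{j} q_j - f_i(\eta_i)q_i$ yields
\begin{equation*}
\alpha - \gamma_0 - 2q_i - \sum_{j\neq i} q_j - f_i(\eta_i) = 0,
\end{equation*}
which I can rewrite as $q_i + Q = \alpha - \gamma_0 - f_i(\eta_i)$, where $Q = \sum_j q_j$. The second-order condition is satisfied since the profit is strictly concave in $q_i$ (the quadratic term is $-q_i^2$), so this FOC does identify the unique best response.

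Second, I would sum these $n$ equations over $i$ to obtain a single scalar equation in the aggregate $Q$. The left side becomes $Q + nQ = (n+1)Q$, and the right side becomes $n(\alpha - \gamma_0) - \sum_j f_j(\eta_j)$. Solving yields
\begin{equation*}
Q = \frac{n(\alpha - \gamma_0) - \sum_{j} f_j(\eta_j)}{n+1}.
\end{equation*}

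Finally, I would back-substitute this expression for $Q$ into the FOC $q_i = \alpha - \gamma_0 - f_i(\eta_i) - Q$ and simplify. Clearing the common denominator, the numerator becomes $(n+1)(\alpha - \gamma_0 - f_i(\eta_i)) - n(\alpha-\gamma_0) + \sum_j f_j(\eta_j)$, which collapses to $(\alpha - \gamma_0) - nf_i(\eta_i) + \sum_{j\neq i} f_j(\eta_j)$, matching Equation~\eqref{eq: quantity produced app1}. There is no real obstacle here — the derivation is a direct generalization of the Cournot calculation leading to Equation~\eqref{eq: quantity produced} in the linear case, with $\gamma\eta_i(g)$ simply replaced by $-f_i(\eta_i(g))$ and the signs tracked accordingly. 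The only thing worth flagging is the implicit assumption that the resulting $q_i$ is nonnegative, which (as in \cite{goyal2003}) must be imposed by restricting the parameter space; this is deferred to the theorems that invoke the lemma.
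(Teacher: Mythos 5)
Your derivation is correct and amounts to the same approach as the paper: the authors simply cite the standard Cournot equilibrium formula $q_i = \bigl(a - nb_i + \sum_{j\neq i}b_j\bigr)/(n+1)$ from Tirole and substitute $a = \alpha - \gamma_0$, $b_i = f_i(\eta_i(g))$, whereas you derive that formula from the first-order conditions and the aggregation trick. Your version is self-contained and the algebra checks out, including the correct handling of the $q_i^2$ term in the FOC and the remark that nonnegativity of $q_i$ is deferred to the parameter restrictions imposed later.
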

\begin{proof} From \cite{Tir88}, for any oligopoly with profit function of the form:
\begin{equation}
Y_i(q) = aq_i - \left(\sum_{j \in N} q_{j}\right)q_i - b_iq_i
\end{equation}
The resulting Cournot equilibrium point on quantities is:
\begin{equation}
q_i = \frac{a - nb_i + \sum_{j \neq i}b_j}{n+1}
\label{eqn:CournotEq}
\end{equation}
In our case, we have:
\begin{gather*}
a = \alpha - \gamma_0\\
b_i = f_{i}(\eta_{i}(g)) \quad \forall i
\end{gather*}
Substituting these definitions into Expression (\ref{eqn:CournotEq}) yields Expression (\ref{eq: quantity produced app1}). This completes the proof.
\end{proof}

\begin{remark}
It is worth noting that when for each firm $i$, $b_i = -\gamma \eta_{i}(g)$ then the cost function (\ref{eq: marginal cost}) and induced equilibrium quantity (\ref{eq: quantity produced}) is retrieved from Goyal and Joshi.
\end{remark}

\begin{corollary}
\label{cor: qi nonnegative}
Suppose that $f$ is a convex function that has a minimum at $0$.  Further, suppose $f_{i}(\eta_{i}(g))=f(\eta_{i}(g)-k_{i})$ where $k_{i} \in \{1,2,\ldots ,n-1\}$.
If the parameters $\alpha$ and $\gamma_{0}$ and the function $f$ are such that:
\begin{equation}
\label{eq: qi nonnegative}
\alpha - \gamma_{0}-n \max(f(n-1),f(1-n))-\frac{1}{2}(n-1)\max(f(1)-f(0),f(-1)-f(0)) > 0
\end{equation}
and $n \geq 2$, then the Cournot equilibrium point quantities (\ref{eq: quantity produced app1}) are nonnegative for all firms and for all collaboration graphs and the two inequalities are true:
\begin{align}
2q_{i}(g)-\frac{n-1}{n+1} [f(1)-f(0)] &>0 \label{eq: ineq 1} \\
2q_{i}(g)-\frac{n-1}{n+1}[f(-1)-f(0)] &>0  \label{eq: ineq 2}
\end{align}
\end{corollary}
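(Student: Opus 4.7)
The plan is to plug the Cournot quantity from Lemma \ref{lem: qi} into the target inequalities and obtain a lower bound on $q_i(g)$ that does not depend on $g$; the parameter hypothesis (\ref{eq: qi nonnegative}) is then tailored so that the two strict inequalities fall out, from which nonnegativity of $q_i(g)$ follows for free.

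First I would pin down the range of $f$ that actually matters. For every graph $g$, $\eta_i(g)\in\{0,\dots,n-1\}$, and with $k_i\in\{1,\dots,n-1\}$ the argument $\eta_i(g)-k_i$ always lies in the symmetric interval $[-(n-1),n-1]$. Because $f$ is convex with a minimum at $0$, it is non-increasing on $(-\infty,0]$ and non-decreasing on $[0,\infty)$, so on $[-(n-1),n-1]$ its maximum is attained at an endpoint, giving
\begin{equation*}
f_i(\eta_i(g)) = f(\eta_i(g)-k_i) \leq F := \max(f(n-1),f(1-n)),
\end{equation*}
while $f_j(\eta_j(g))\geq f(0)$ for each $j$, since $0$ is a minimizer of $f$. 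Plugging these two uniform bounds into the Cournot expression (\ref{eq: quantity produced app1}) produces the graph-independent lower bound
\begin{equation*}
q_i(g)\geq\frac{\alpha-\gamma_0-nF+(n-1)f(0)}{n+1}.
\end{equation*}

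Next I would multiply this bound by $2(n+1)$ and compare with the right-hand sides of (\ref{eq: ineq 1}) and (\ref{eq: ineq 2}). The hypothesis (\ref{eq: qi nonnegative}) rearranges to
\begin{equation*}
2(\alpha-\gamma_0-nF) > (n-1)\max\bigl(f(1)-f(0),\,f(-1)-f(0)\bigr),
\end{equation*}
and the right side here dominates each of $(n-1)[f(1)-f(0)]$ and $(n-1)[f(-1)-f(0)]$ individually. Absorbing the residual $2(n-1)f(0)$ term (which is nonnegative since $0$ minimizes $f$), both (\ref{eq: ineq 1}) and (\ref{eq: ineq 2}) follow directly. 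Nonnegativity of $q_i(g)$ is then automatic from either inequality because $f(\pm 1)\geq f(0)$ forces their right-hand sides to be nonnegative, so $2q_i(g)>0$.

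The main obstacle is bookkeeping rather than any conceptual difficulty: one must correctly observe that $\eta_i(g)-k_i$ is confined to $[-(n-1),n-1]$ so that convex unimodality of $f$ delivers the clean endpoint upper bound $F$, and then keep track of the constants $2$, $n-1$, and $n+1$ so that the factor $\tfrac{1}{2}(n-1)$ appearing in the hypothesis exactly matches what is needed to dominate the $\tfrac{n-1}{n+1}[f(\pm 1)-f(0)]$ terms on the right of (\ref{eq: ineq 1})--(\ref{eq: ineq 2}).
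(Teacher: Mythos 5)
Your proposal is correct and follows essentially the same route as the paper: bound $f_i(\eta_i(g))$ above by $\max(f(n-1),f(1-n))$ using convex unimodality on the admissible range of $\eta_i(g)-k_i$, discard the competitors' cost sum from below, and match the resulting graph-independent lower bound on $q_i(g)$ against hypothesis (\ref{eq: qi nonnegative}), with nonnegativity of $q_i(g)$ falling out of (\ref{eq: ineq 1})--(\ref{eq: ineq 2}) since $f(\pm 1)\geq f(0)$. The only caveat is that your step discarding the residual $2(n-1)f(0)$ term tacitly assumes $f(0)\geq 0$ (minimality at $0$ alone does not give this), but the paper's own proof makes the identical tacit assumption when it asserts $f_j(\eta_j(g))\geq 0$, so this is not a gap relative to the paper.
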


\begin{proof}
Since $n \geq 2$ and $f$ is convex and has a minimum at $0$, this implies that $\frac{n-1}{n+1} [f(1)-f(0)]$ and $\frac{n-1}{n+1} [f(-1)-f(0)]$ are non-negative.  Hence, (\ref{eq: ineq 1}) and  (\ref{eq: ineq 2}) imply that $q_{i}(g)$ is non-negative and hence it suffices to only show that (\ref{eq: ineq 1}) and  (\ref{eq: ineq 2}) are implied by (\ref{eq: qi nonnegative}).

For all $i$, function $f_{i}$ is a convex function of the degree of node $i$ in the graph $g$; the degree of node $i$ must take an integer value between $0$ and $n-1$, which due to the convexity of $f_{i}$ and the fact that $k_{i} \in \{1,2,\ldots n-1\}$ implies that the maximum of $f_{i}$ is equivalent to $\max(f(n-1),f(-n+1))$.  That is,
\begin{equation*}
f_{i}(\eta_{i}(g)) \leq \max(f(n-1),f(-n+1))
\end{equation*}
This means that (\ref{eq: qi nonnegative}) implies:
\begin{equation}
\label{eq: ineq 3}
\alpha - \gamma_{0}-n f_{i}(\eta_{i}(g))-\frac{1}{2}(n-1)\max(f(1)-f(0),f(-1)-f(0)) > 0 \quad \forall i
\end{equation}
Since, all $f_{i}(\eta_{i}(g)) \geq 0$, we may add $\sum_{j \neq i}f_{j}(\eta_{j}(g))$ to the left side of  (\ref{eq: ineq 3}) without harming the inequality, implying:
\begin{equation}
\label{eq: ineq 4}
\alpha - \gamma_{0}-n f_{i}(\eta_{i}(g))+\sum_{j \neq i}f_{j}(\eta_{j}(g))-\frac{1}{2}(n-1)\max(f(1)-f(0),f(-1)-f(0)) > 0 \quad \forall i
\end{equation}
Now we divide by $n+1$:
\begin{equation}
\label{eq: ineq 5}
\frac{\alpha - \gamma_{0}-n f_{i}(\eta_{i}(g))+\sum_{j \neq i}f_{j}(\eta_{j}(g))}{n+1}-\frac{1}{2}\frac{n-1}{n+1}\max(f(1)-f(0),f(-1)-f(0)) > 0 \quad \forall i
\end{equation}
The term on the left of (\ref{eq: ineq 5}) is $q_{i}(g)$:
\begin{equation}
\label{eq: ineq 6}
q_{i}(g)-\frac{1}{2}\frac{n-1}{n+1}\max(f(1)-f(0),f(-1)-f(0)) > 0 \quad \forall i
\end{equation}
Multiply through by two and note:
\begin{align*}
2q_{i}(g)>\frac{n-1}{n+1}\max(f(1)-f(0),f(-1)-f(0)) &>\frac{n-1}{n+1}[f(1)-f(0)]  \quad \forall i\\
2q_{i}(g)>\frac{n-1}{n+1}\max(f(1)-f(0),f(-1)-f(0)) &>\frac{n-1}{n+1}[f(-1)-f(0)]  \quad \forall i
\end{align*}
Now (\ref{eq: ineq 1}) and  (\ref{eq: ineq 2}) immediately follow.
\end{proof}
\begin{remark}
This essentially means that the steeper a function $f$ around zero and on the interval $(-n+1,n-1)$, the greater the quantity $\alpha-\gamma_0$ is needed to ensure the theorem proved later in this section.  It is worth pointing out that this bound may often not be tight (i.e., the inequalities may hold true and production quantities may be positive even when the condition is not not met).
\end{remark}

\begin{theorem}
\label{thm: stable assym k distn graph}
Suppose that $f$ is a convex function that has a minimum at $0$.  Further, suppose $f_{i}(\eta_{i}(g))=f(\eta_{i}(g)-k_{i})$.  Define the change in $f$ as $\triangle^{-} f_{i}(k_{i})=f_{i}(k_{i}-1)-f_{i}(k_{i})=f(-1)-f(0)=\triangle^{-} f(0)$ and $\triangle^{+} f_{i}(k_{i})=f_{i}(k_{i}+1)-f_{i}(k_{i})=f(1)-f(0)=\triangle^{+} f(0)$.  Suppose $n \geq 2$ firms compete in an oligopoly with market demand $p=\alpha - \sum_{i \in N}q_{i}$ and marginal costs $c_i(g) = \gamma_0 + f_i(\eta_i(g))$.  If the parameters $\alpha$ and $\gamma_{0}$ and the function $f$ obey condition (\ref{eq: qi nonnegative}), then the equivalence class of graphs $[g]_{\eta}$ such that $\eta_{i}(g)=k_{i}$ is an equivalence class of stable collaboration graphs.
\end{theorem}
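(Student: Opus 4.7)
The plan is to reduce pairwise stability to a monotonicity statement about the equilibrium quantities $q_i(\cdot)$ produced in Lemma \ref{lem: qi}, and then invoke Corollary \ref{cor: qi nonnegative} to turn that monotonicity into a monotonicity of the payoffs $Y_i$.

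First I would record the standard Cournot simplification: with linear inverse demand $P(Q) = \alpha - Q$ and constant marginal cost $c_i(g)$, the firm's first-order condition $\alpha - 2q_i - \sum_{j \neq i} q_j - c_i(g) = 0$ gives $P - c_i(g) = q_i(g)$, so the equilibrium payoff collapses to $Y_i(g) = (P(Q) - c_i(g))\, q_i(g) = q_i(g)^2$. This turns the stability question into a comparison of the nonnegative numbers $q_i(g)$, $q_i(g-ij)$, $q_i(g+ij)$.

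Next, for any $g \in \eta^{-1}(\mathbf{d})$ I would evaluate Lemma \ref{lem: qi} at $\eta_\ell(g) = k_\ell$, obtaining the uniform value $q_i(g) = (\alpha - \gamma_0 - f(0))/(n+1)$. Then, using that $\eta_i(g \pm ij) = k_i \pm 1$, $\eta_j(g \pm ij) = k_j \pm 1$, and $\eta_\ell(g \pm ij) = k_\ell$ otherwise, I would substitute into (\ref{eq: quantity produced app1}) and show after cancellation that
\begin{gather*}
q_i(g-ij) - q_i(g) = -\frac{n-1}{n+1}\bigl(f(-1) - f(0)\bigr) = -\frac{n-1}{n+1}\,\triangle^{-} f(0), \\
q_i(g+ij) - q_i(g) = -\frac{n-1}{n+1}\bigl(f(1) - f(0)\bigr) = -\frac{n-1}{n+1}\,\triangle^{+} f(0),
\end{gather*}
and identical expressions with $i$ replaced by $j$. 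Because $f$ is convex with minimum at $0$, the quantities $\triangle^{\pm} f(0)$ are nonnegative, so $q_i(g) \geq q_i(g \pm ij)$ and likewise for $j$.

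To upgrade this to $Y_i(g) \geq Y_i(g \pm ij)$, I would factor $q_i(g)^2 - q_i(g \pm ij)^2 = \bigl(q_i(g) - q_i(g \pm ij)\bigr)\bigl(q_i(g) + q_i(g \pm ij)\bigr)$. The first factor is nonnegative by the preceding step, and the second factor equals $2q_i(g) - \tfrac{n-1}{n+1}\triangle^{\pm} f(0)$, which is strictly positive by inequalities (\ref{eq: ineq 1}) and (\ref{eq: ineq 2}) of Corollary \ref{cor: qi nonnegative} (themselves granted by hypothesis (\ref{eq: qi nonnegative})). Hence $Y_i(g) \geq Y_i(g-ij)$ for every existing link $ij \in g$ and $Y_i(g) \geq Y_i(g+ij)$ for every absent link, and symmetrically for $j$. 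The pairwise stability conditions are therefore met: no player gains from severing an existing link, and no prospective link is mutually profitable (in fact, neither endpoint benefits), so clause (2) of the definition holds vacuously. The main subtlety to guard against is the possibility of one of the $q$'s being negative, which would break the squaring step; this is precisely what condition (\ref{eq: qi nonnegative}) and the strict inequalities in Corollary \ref{cor: qi nonnegative} are designed to preclude.
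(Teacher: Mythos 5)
Your proposal is correct and takes essentially the same route as the paper: both compute the change in $q_i$ from Lemma \ref{lem: qi}, use $\triangle^{\pm} f_i(k_i)=\triangle^{\pm}f(0)$ to reduce it to $-\tfrac{n-1}{n+1}\triangle^{\pm}f(0)$, factor the payoff difference as a difference of squares, and invoke inequalities (\ref{eq: ineq 1})--(\ref{eq: ineq 2}) of Corollary \ref{cor: qi nonnegative}. Making the identity $Y_i(g)=q_i(g)^2$ explicit at the outset is a slightly cleaner presentation of the same algebra the paper carries out via $P$, $Q$, and $c_i$.
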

\begin{proof}
Let $g$ be a graph $g$ such that $\eta_{i}(g)=k_{i}$ for all firms $i$.  Consider a firm $i$ who may consider dropping its link with node $j$.  If node $i$ drops its link with node $j$ leading to graph $g-ij$, then $\eta_{i}(g-ij)=k_{i}-1$ and $\eta_{j}(g-ij)=k_{j}-1$, while $\eta_{r}(g-ij)=k_{r}$ for $r \not \in \{i,j\}$.  
Using Lemma \ref{lem: qi}
\begin{equation}
q_{i} = \frac{\alpha - \gamma_{0}-n f_{i}(\eta_{i}(g))+ \sum_{j \neq i \in N}f_{j}(\eta_{j}(g)) }{n+1}
\end{equation}
Calculate:
\begin{align*}
q_{i}(g-ij) &= q_{i}(g)- \triangle^{-} f_{i}(k_{i}) \left( \frac{n}{n+1} \right)+ \triangle^{-} f_{j}(k_{j}) \left( \frac{1}{n+1}  \right)\\
q_{j}(g-ij) &= q_{j}(g)- \triangle^{-} f_{j}(k_{j}) \left( \frac{n}{n+1} \right)+ \triangle^{-} f_{i}(k_{i}) \left( \frac{1}{n+1}  \right)\\
q_{r}(g-ij) &= q_{r}(g)+ \triangle^{-} f_{i}(k_{i}) \left( \frac{1}{n+1} \right)+ \triangle^{-} f_{j} (k_{j})\left( \frac{1}{n+1}  \right)
\end{align*}
It then follows that
\begin{align*}
Q(g-ij)&=Q(g)- \left( \frac{1}{n+1} \right) (\triangle^{-} f_{i}(k_{i})+\triangle^{-} f_{j}(k_{j}))\\
P(g-ij)&=P(g)+\left( \frac{1}{n+1} \right)(\triangle^{-} f_{i}(k_{i})+\triangle^{-} f_{j}(k_{j}))  \\
c_{i}(g-ij)&=c_{i}(g)+\triangle^{-} f_{i}(k_{i})
\end{align*}
Now, we can calculate $Y_{i}(g-ij)$ in terms of $Y_{i}(g)$:
\begin{align*}
Y_{i}(g-ij)&=q_{i}(g-ij)[P(g-ij)-c_{i}(g-ij)] \\
   &= Y_{i}(g)+ q_{i}(g) \left( \frac{2}{n+1} \right) [  \triangle^{-} f_{j}(k_{j}) - n \triangle^{-} f_{i}(k_{i})]
   +\left( \frac{[  \triangle^{-} f_{j}(k_{j}) - n \triangle^{-} f_{i}(k_{i})]}{n+1} \right)^2
\end{align*}
Since $f_{i}(\eta_{i}(g))=f(\eta_{i}(g)-k_{i})$ this implies that $\triangle^{-} f_{i}(k_{i})=\triangle^{-} f_{j}(k_{j})$ leading to (\ref{eq: profit change 1A}) and then (\ref{eq: profit change 2A}) and (\ref{eq: profit change 3A}) through algebraic manipulation.  Finally, by the assumptions of the theorem and condition (\ref{eq: qi nonnegative}) each of the quantities $\triangle^{-} f_{i}(k_{i})$, $\frac{n-1}{n+1}$, and $2 q_{i}(g) -  \frac{n-1}{n+1} \triangle^{-} f_{i}(k_{i})$ are nonnegative implying (\ref{eq: profit change 4A}).
\begin{align}
Y_{i}(g-ij)-Y_{i}(g)&=2 q_{i}(g)\triangle^{-} f_{i}(k_{i}) \left( \frac{1-n}{n+1} \right)+(\triangle^{-} f_{i}(k_{i}))^{2} \left( \frac{1-n}{n+1} \right)^2 \label{eq: profit change 1A}\\
&=  \triangle^{-} f_{i}(k_{i}) \left(  \frac{1-n}{n+1}   \right) \left(2 q_{i}(g) + \frac{1-n}{n+1} \triangle^{-} f_{i}(k_{i})    \label{eq: profit change 2A} \right)\\
&= - \triangle^{-} f_{i}(k_{i}) \left(  \frac{n-1}{n+1}   \right) \left(2 q_{i}(g) -  \frac{n-1}{n+1} \triangle^{-} f_{i}(k_{i})   \label{eq: profit change 3A} \right)\\
&<0 \label{eq: profit change 4A}
\end{align}
This implies that if firm $i$ attempts to drop link $ij$, then $Y_{i}(g) > Y_{i}(g-ij)$ and thus firm $i$ decreases its profit.  The same will be true for firm $j$.  Hence, no firm has an incentive to drop a link from graph $g$.
Now, we will consider the case where firm $i$ attempts to add a link to the graph $g$, giving $g+ij$ under the assumption that the link $ij$ does not exist in graph $g$.  This analysis will follow closely the analysis for $g-ij$.  First note that $\eta_{i}(g)=k_{i}$ for all firms $i$ and $\eta_{i}(g+ij)=k_{i}+1$ and $\eta_{j}(g+ij)=k_{j}+1$, while $\eta_{r}(g+ij)=k_{r}$ for $r \not \in \{i,j\}$.  We define $\triangle^{+} f_{i}(k_{i})$ as $\triangle^{+} f_{i}(k_{i})=f_{i}(k+1)-f_{i}(k)$; note the subtle difference from the definition of $\triangle^{-} f_{i}(k_{i})$.  Again using Lemma \ref{lem: qi}, we calculate the production quantity for each node in graph $g+ij$:
\begin{align*}
q_{i}(g+ij) &= q_{i}(g)- \triangle^{+} f_{i}(k_{i}) \left( \frac{n}{n+1} \right)+ \triangle^{+} f_{j}(k_{j}) \left( \frac{1}{n+1}  \right)\\
q_{j}(g+ij) &= q_{j}(g)- \triangle^{+} f_{j}(k_{j}) \left( \frac{n}{n+1} \right)+ \triangle^{+} f_{i}(k_{i}) \left( \frac{1}{n+1}  \right)\\
q_{r}(g+ij) &= q_{r}(g)+ \triangle^{+} f_{i}(k_{i}) \left( \frac{1}{n+1} \right)+ \triangle^{+} f_{j}(k_{j}) \left( \frac{1}{n+1}  \right)
\end{align*}
We can then calculate the corresponding total production quantity $Q$, the market price $P$ and marginal costs for each player for the graph $g+ij$:
\begin{align*}
Q(g+ij)&=Q(g)- \left( \frac{1}{n+1} \right) (\triangle^{+} f_{i}(k_{i})+\triangle^{+} f_{j}(k_{j}))\\
P(g+ij)&=P(g)+\left( \frac{1}{n+1} \right)(\triangle^{+} f_{i}(k_{i})+\triangle^{+} f_{j}(k_{j}))  \\
c_{i}(g+ij)&=c_{i}(g)+\triangle^{+} f_{i}(k_{i})
\end{align*}
Now, we can calculate $Y_{i}(g+ij)$ in terms of $Y_{i}(g)$:
\begin{align*}
Y_{i}(g+ij)&=q_{i}(g+ij)[P(g+ij)-c_{i}(g+ij)] \\
   &= Y_{i}(g)+ q_{i}(g) \left( \frac{2}{n+1} \right) [  \triangle^{+} f_{j}(k_{j}) - n \triangle^{+} f_{i}(k_{i})]
   +\left( \frac{[  \triangle^{+} f_{j}(k_{j}) - n \triangle^{+} f_{i}(k_{i})]}{n+1} \right)^2
\end{align*}
Since $f_{i}(\eta_{i}(g))=f(\eta_{i}(g)-k_{i})$ this implies that $\triangle^{+} f_{i}(k_{i})=\triangle^{+} f_{j}(k_{j})$ leading to (\ref{eq: profit change 5}) and then (\ref{eq: profit change 6}) and (\ref{eq: profit change 7}) through algebraic manipulation.  Finally, by the assumptions of the theorem and condition (\ref{eq: qi nonnegative}), each of the quantities $\triangle^{+} f_{i}(k_{i})$, $\frac{n-1}{n+1}$, and $2 q_{i}(g) -     \frac{n-1}{n+1} \triangle^{+} f_{i}(k_{i})$ are positive implying (\ref{eq: profit change 8}).
\begin{align}
Y_{i}(g+ij)-Y_{i}(g)&=2 q_{i}(g)\triangle^{+} f_{i}(k_{i}) \left( \frac{1-n}{n+1} \right)+(\triangle^{+} f_{i}(k_{i}))^{2} \left( \frac{1-n}{n+1} \right)^2 \label{eq: profit change 5}\\
&=  \triangle^{+} f_{i}(k_{i}) \left(  \frac{1-n}{n+1}   \right) \left(2 q_{i}(g) + \frac{1-n}{n+1} \triangle^{+} f_{i}(k_{i})    \label{eq: profit change 6} \right)\\
&= - \triangle^{+} f_{i}(k_{i}) \left(  \frac{n-1}{n+1}   \right) \left(2 q_{i}(g) -     \frac{n-1}{n+1} \triangle^{+} f_{i}(k_{i})   \label{eq: profit change 7} \right)\\
&<0 \label{eq: profit change 8}
\end{align}
This implies that if firm $i$ attempts to add a link $ij$, then $Y_{i}(g) > Y_{i}(g+ij)$ and the firm decreases its profit.  The same will be true for firm $j$.  Hence, no firm has an incentive to add a link to graph $g$. Since no firm has an incentive to add or drop a link to graph $g$, it is stable. This completes the proof.
\end{proof}


\begin{example}
\label{ex: assymetric oligopoly}
We present a numerical example of Theorem \ref{thm: stable assym k distn graph}. Let $N=5$ firms compete in an oligopoly with inverse demand function $P=100-Q$, fixed cost $\gamma_{0}=5$, and $f(\eta(g))=(\eta(g))^{2}+\psi$ where $\psi=2$.  Each firm has the shifted function $f_{i}(\eta_{i}(g))=f(\eta_{i}(g)-k_{i})=(\eta_{i}(g) - k_{i})^{2}+\psi$ where $\mathbf{k}=[2,3,4,3,2]^T$ and .  We want to test the stability of a graph $g$ with $\eta_{i}(g)=k_{i}$ and $f_{i}(\eta_{i}(g))=(\eta_i(g)-k_{i})^{2}+\psi$ for each node $i$. In order to apply theorem (\ref{thm: stable assym k distn graph}), we must ensure condition (\ref{eq: qi nonnegative}) is met.  We calculate:
\begin{gather*}
f(-n+1)=f(-4)=18,\quad
f(-1)=3,\quad
f(0)=2, \\
f(1)=3,\quad
f(n-1)=f(4)=18,\quad
\end{gather*}
Hence,
\begin{gather*}
\max(f(n-1),f(1-n))=18,\quad
\max(f(1)-f(0),f(-1)-f(0))=1,\quad
\\
n \max(f(n-1),f(1-n))=5\cdot18=90,\quad
(n-1)\max(f(1)-f(0),f(-1)-f(0))=4\cdot1=4,\quad
\end{gather*}

\begin{multline*}
\alpha - \gamma_{0}-n \max(f(n-1),f(1-n))-\frac{1}{2}(n-1)\max(f(1)-f(0),f(-1)-f(0))= \\100-5-90-\frac{1}{2}\cdot4=3>0
\end{multline*}
Two stable isomorphic graphs, shown in Figure \ref{fig:Stable}, have a degree sequence equivalent to $\mathbf{k}$. 
\begin{figure}[htbp]
\centering
\includegraphics[scale=0.5]{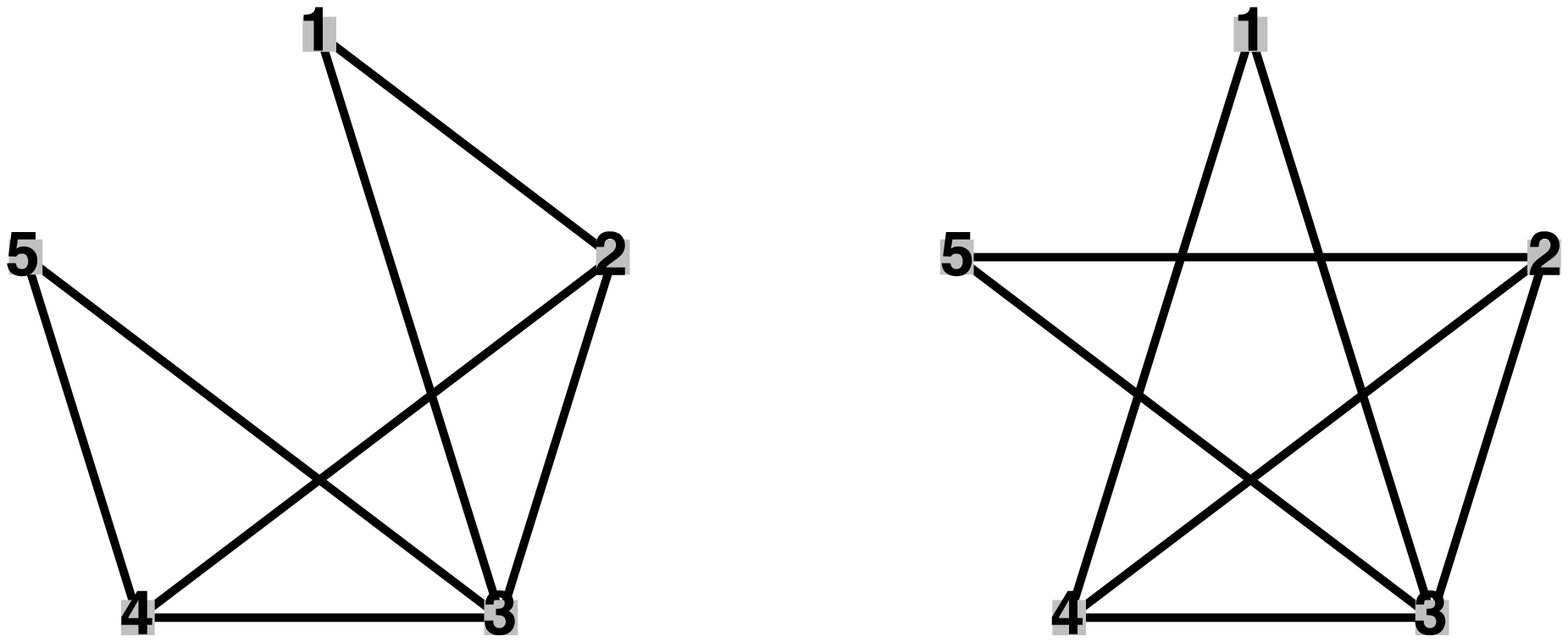}
\caption{Two stable and isomorphic graphs that are possible configurations for collaboration in the example oligopoly.}
\label{fig:Stable}
\end{figure}

However, for the given parameters, there are 33 stable graphs of which only the two shown have degree sequence equal to $\mathbf{k}$. These graphs were computed using Maple and are shown in Figure \ref{fig:AllStable}.
\begin{figure}[htbp]
\centering
\includegraphics[scale=0.5]{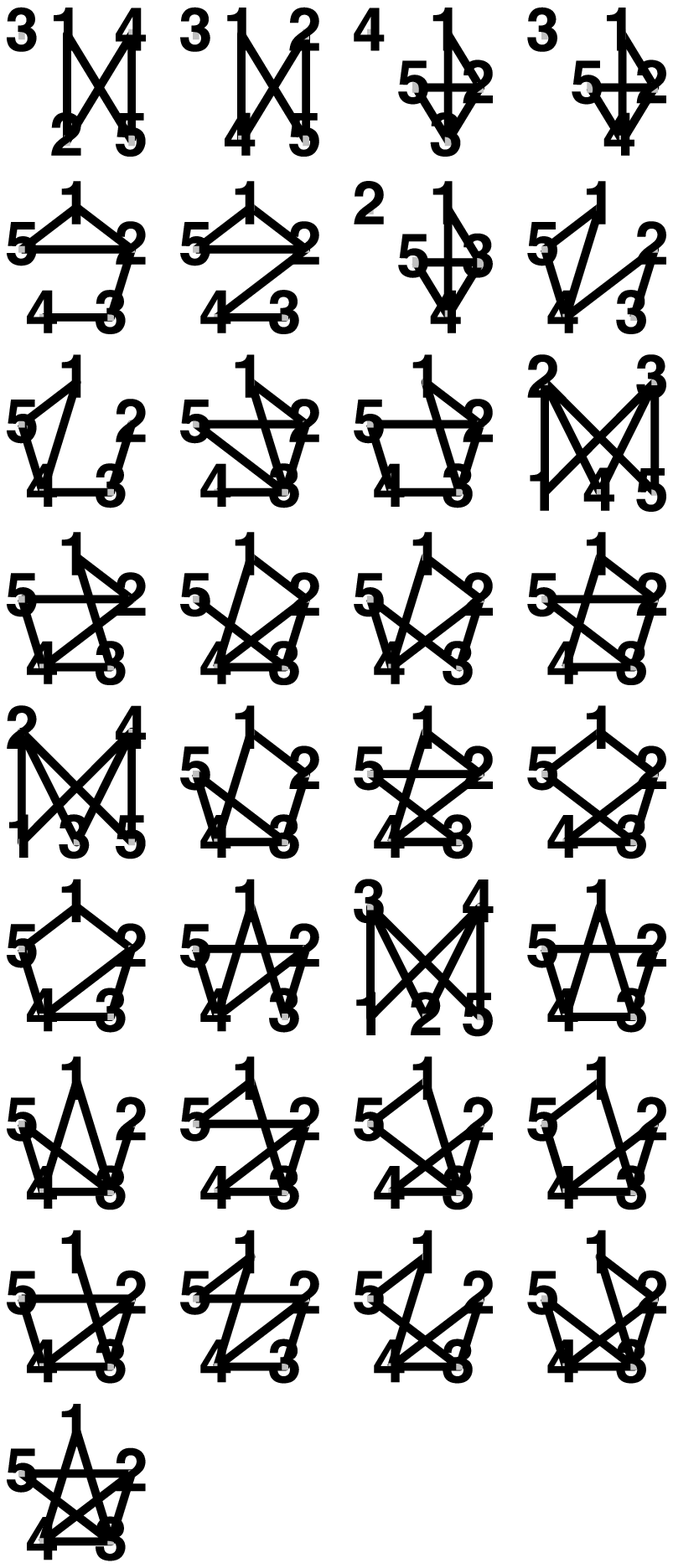}
\caption{The 29 stable graphs that arise from the parameters given.}
\label{fig:AllStable}
\end{figure}
It is interesting to investigate the other stable networks with degree sequences different from $\mathbf{k}$.  The graph in Row 7, Column 2 of Figure \ref{fig:AllStable} shows a stable configuration where both nodes 3 and 4 would prefer one more link, but they are already linked together and no other node requires an additional link.  Hence, the network is stable because each node is either satisfied or no pair of nodes can bilaterally improve themselves through the addition or removal of a link.  It is interesting to note that in the case of the graph shown, if both nodes 1 and 5, were to give up their link with one another and instead link to nodes 4 and 3 respectively, then each node would have minimized its marginal costs.  While it is the case that nodes 1 and 5 would not benefit from such a trade that would help nodes 3 and 4. These nodes would indirectly hurt themselves via the decreased market price as a result of the additional quantity produced by nodes 3 and 4.  Nonetheless this analysis brings out the fact that the manner in which nodes link to one another and the manner in which stability is analyzed, greatly affects which networks are deemed to be stable.
\end{example}

\begin{example} For small numbers of firms $<7$ generating the set of stable graphs takes seconds (even in an interpreted language like Maple).  
Consider the case of a firm (in this case Firm 3) who determines that collaboration is desired, but recognizes there may be a sink cost (not included in the model) associated with initializing such a collaboration. For example, the time taken to establish industry connections will cost  in terms of human labor. Assuming Firm 3 is a selfish profit maximizer and assumes that all other firms are selfish profit maximizers who will play to a stable configuration, then the analysis of the potentially stable graphs will inform Firm 3 on the potential payoffs it might receive. If the network is already in a stable configuration, then since there are three stable configurations in which Firm 3 is not connected to any other players, then it may not be worthwhile to even explore collaboration. On the other hand, if there is no collaboration (an unstable condition), then Firm 3 may hope to steer the network evolution and will evaluate its various payoffs in each possible stable configuration. The possible payoffs are shown in Figure \ref{fig:Player3Payoff}:
\begin{figure}[htbp]
\centering
\includegraphics[scale=0.5]{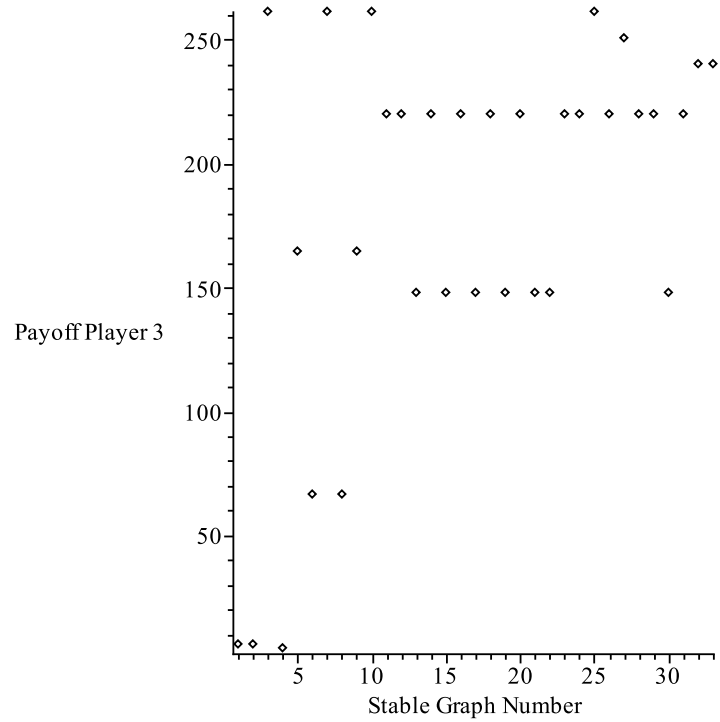}
\caption{The possible payoffs Player 3 can obtain in the various stable configurations. From Figure \ref{fig:AllStable}, graphs are numbered from left-to-right, top-to-bottom.}
\label{fig:Player3Payoff}
\end{figure}
Note that there is a substantial variation in the payoff the Player 3 may receive. In the empty graph, Player 3 receives a payoff of $169/4$. Notice that some collaborative scenarios disadvantage Player 3 ($3$ of the $33$) while most improve Player 3's payoff. 
\end{example}

\begin{remark}
Goyal and Joshi show that in a market with homogenous products and under quantity competition, the uniquely stable network is the complete network for specific cost functions under some parameter restrictions \cite{goyal2003}.  Theorem \ref{thm: negative dv dt complete graph} extends the result in Goyal and Joshi \cite{goyal2003} by using nonlinear costs and decreasing the restriction on parameters necessary for the model to be feasible.
\end{remark}

\begin{theorem}
\label{thm: negative dv dt complete graph}
Suppose the marginal cost for firm $i$ on graph $g$ is $c_{i}(g)=\gamma_{0}+f(\eta_{i}(g))$.  Define $\triangle f(k)=f(k+1)-f(k)$.  Further suppose each firm reduces its marginal cost for each additional collaboration (Condition (1)), the collaboration cost reduction function $f$ is convex (Condition (2)), collaboration cost reduction function $f$ is positive (Condition (3)), the production quantity is positive (Condition (4)), and the collaboration cost reduction function $f$ is not too steep (Condition (5)):
\begin{enumerate*}
\item $x_{1}>x_{2}$ implies $f(x_{1})<f(x_{2})$; \\
\item $f$ is convex; \\
\item $f$ is positive; \\
\item $\alpha - \gamma_{0}> n f(0)$; and\\
\item $\triangle f(k_{1})-n \triangle f(k_{2}) > 0$ for all $k_{1},k_{2} \in \{0,1,\ldots n-1\}$
\end{enumerate*}
Then the complete network $g^{c}$ is a stable graph.
\end{theorem}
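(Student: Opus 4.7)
The plan is to mirror the profit-change calculation from the proof of Theorem \ref{thm: stable assym k distn graph}, specialized to the present setting where every firm shares the unshifted cost function $f$ and the candidate stable graph is the complete graph. Because $g^c$ has no missing links, the second clause of pairwise stability is vacuous, so it suffices to verify, for each edge $ij \in g^c$, that $Y_i(g^c) \ge Y_i(g^c - ij)$; the symmetric inequality for firm $j$ follows from the symmetry $\eta_i(g^c) = \eta_j(g^c) = n-1$ and $f_i = f_j = f$.

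I would first apply Lemma \ref{lem: qi} to the complete graph, obtaining
\[
q_i(g^c) = \frac{\alpha - \gamma_0 - f(n-1)}{n+1},
\]
which is positive by Conditions (1), (3), and (4). I would then reuse the increment derivation from the proof of Theorem \ref{thm: stable assym k distn graph}, setting $\Delta^- f_i(n-1) = f(n-2) - f(n-1) = -\Delta f(n-2)$, which is strictly positive by Condition (1); the key symmetry $\Delta^- f_i(n-1) = \Delta^- f_j(n-1)$ holds trivially since both endpoints evaluate the same function at the same degree. Combined with the Cournot identity $Y_i = q_i^2$ (valid at the interior equilibrium because $P - c_i = q_i$), the telescoping collapses to the factored expression
\[
Y_i(g^c - ij) - Y_i(g^c) = \frac{(n-1)\Delta f(n-2)}{n+1}\left[2q_i(g^c) + \frac{(n-1)\Delta f(n-2)}{n+1}\right].
\]
The leading factor is negative by Condition (1), so stability reduces to showing the bracketed term is positive.

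To close the sign argument, Conditions (1) and (2) imply that $|\Delta f|$ is a decreasing function on $\{0,1,\ldots,n-2\}$, so $|\Delta f(n-2)| \le |\Delta f(0)| = f(0) - f(1) \le f(0)$ by Condition (3). Condition (4) then gives $2q_i(g^c) > 2(n-1)f(0)/(n+1) \ge (n-1)|\Delta f(n-2)|/(n+1)$, completing the sign analysis. Condition (5) enters to certify that the interior Cournot formula of Lemma \ref{lem: qi} remains valid (i.e., produces positive quantities) at each graph reachable from $g^c$ by a single link perturbation, so that the $q_i^2$ profit representation is legitimately in force on both sides of the comparison. The main obstacle I anticipate is bookkeeping: reconciling the $\Delta^- f$ notation of the earlier proof with the $\Delta f(k) = f(k+1) - f(k)$ notation of the present statement, and carefully tracking sign flips because $f$ here is globally decreasing rather than minimized at the candidate degrees $k_i$; once the translations are in place, the final estimate reduces to a short monotonicity argument on the sequence $|\Delta f(0)|, \ldots, |\Delta f(n-2)|$.
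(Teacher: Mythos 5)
Your argument is correct for the stated claim and follows essentially the same route as the paper: compute $q_i(g^c)$ and $q_i(g^c-ij)$ from Lemma \ref{lem: qi}, use the identity $Y_i = q_i^2$, and show the difference of squares has the right sign (the second clause of pairwise stability being vacuous for $g^c$). Your factored expression for $Y_i(g^c-ij)-Y_i(g^c)$ checks out, and your closing estimate --- $|\triangle f|$ nonincreasing by Conditions (1)--(2), hence $|\triangle f(n-2)| \le f(0)-f(1) \le f(0)$, combined with $q_i(g^c) > (n-1)f(0)/(n+1)$ from Condition (4) --- is actually more self-contained than the paper's, which factors the difference of squares as $[q_i(g^c)-q_i(g^c-ij)][q_i(g^c)+q_i(g^c-ij)]$ and disposes of the second factor by appealing to Corollary \ref{cor: qi nonnegative} (whose hypothesis is, strictly speaking, a different inequality than Condition (4)).

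The one point to correct: your reading of Condition (5) is wrong. It does not certify positivity of quantities at perturbed graphs; it is the hypothesis the paper uses to prove the \emph{uniqueness} half of the result. The paper's proof goes beyond the literal statement and shows $g^c$ is the \emph{only} stable graph: for any $g \ne g^c$ with $ij \notin g$, one has $Y_i(g+ij)-Y_i(g) = q_i(g)\tfrac{2}{n+1}[\triangle f(\eta_j(g)) - n\triangle f(\eta_i(g))] + \tfrac{1}{(n+1)^2}[\triangle f(\eta_j(g)) - n\triangle f(\eta_i(g))]^2 \ge 0$, where Condition (5) makes the bracketed quantity positive, so $i$ and $j$ would both add the missing link and $g$ cannot be pairwise stable. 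Since the theorem as quoted asserts only stability of $g^c$, your proof of that assertion stands without Condition (5); but if the intended result is uniqueness (as the surrounding remark about Goyal and Joshi suggests), your proposal omits that argument entirely.
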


\begin{proof}
The production quantity and profit can be calculated as before in (\ref{eq: quantity produced app1}) and (\ref{eqn: profit oligopoly}), respectively, by using that fact that $f_{i}=f$ for all $i$.
\begin{align*}
    q_{i}(g) &= \frac{\alpha - \gamma_{0}-n f(\eta_{i}(g))+ \sum_{j \neq i \in N}f(\eta_{j}(g)) }{n+1}\\
    Y_{i}(g)&=q_{i}(g)\left( P(Q)-c_{i}(g) \right)=q_{i}(g)\left( \alpha - \left(\sum_{j}q_{j}(g)\right)-\gamma_{0}-f(\eta_{i}(g))  \right)\\
\end{align*}

Observe through algebraic manipulation that $P(Q)-c_{i}(g)=q_{i}(g)$, which implies that $Y_{i}(g)=q_{i}(g) \left( P(Q)-c_{i}(g) \right)=\left( q_{i}(g) \right)^{2}$.%

For a complete network $g^{c}$, $\eta_{j}(g)=n-1$ for all $j$.  Given $g^{c}$:
\begin{equation*}
q_{i}(g^{c}) =\frac{\alpha - \gamma_{0}- f(n-1) }{n+1}
\end{equation*}
Now for a complete network missing a link $(i,j)$, denoted as $g = g^{c}-ij$, for nodes $i$ and $j$, $\eta_{i}(g)=\eta_{j}(g)=n-2$ but all other nodes $k$ have $\eta_{k}(g)=n-1$. Thus:
\begin{equation*}
q_{i}(g^{c}-ij) = \frac{\alpha - \gamma_{0}+(n-2)f(n-1)-(n-1)f(n-2)}{n+1}
\end{equation*}
It follows that:
\begin{equation*}
q_{i}(g^{c})-q_{i}(g^{c}-ij) =\frac{(n-1) [f(n-2)- f(n-1)]}{n+1}
\end{equation*}
Since $n-1 > n-2$, Condition (1) implies $f(n-1)<f(n-2)$ which implies $f(n-2)- f(n-1)>0$.  Hence, $q_{i}(g^{c})-q_{i}(g^{c}-ij)  > 0$.
By Condition (4), $\alpha - \gamma_{0}>n f(0)$, which implies $q_{i} > 0$ by Corollary \ref{cor: qi nonnegative} using $f_{i}=f$.  Further, since $q_{i}(g^{c})+q_{i}(g^{c}-ij) > 0$, which combined with $q_{i}(g^{c})-q_{i}(g^{c}-ij) > 0$, implies $Y_{i}(g^{c})-Y_{i}(g^{c}-ij)>0$.  This implies that any firm $i$ will decrease its profit by dropping link $ij$, hence the graph $g^{c}$ is stable.  Now to show that it is the only stable graph, suppose there is another graph $g \neq g^{c}$ that is also stable.  This implies that there exists a pair of firms $(i,j)$ such that $ij \not\in g$.  Let us consider the graph $g+ij$ relative to the graph $g$.
\begin{displaymath}
q_{i}(g+ij)-q_{i}(g)=\frac{-n}{n+1}\triangle f(\eta_{i}(g))+\frac{1}{n+1}\triangle f(\eta_{j}(g))
\end{displaymath}
and thus that:
\begin{multline*}
Y_{i}(g+ij)-Y_{i}(g) = q_{i}(g+ij)^{2}-q_{i}(g)^{2}=[q_{i}(g+ij)-q_{i}(g)][q_{i}(g+ij)+q_{i}(g)]\\
                 =q_{i}(g) \left( \frac{2}{n+1} \right) \left[\triangle f(\eta_{j}(g))-n\triangle f(\eta_{i}(g))  \right] +\left( \frac{1}{n+1} \right)^2 \left[\triangle f(\eta_{j}(g))-n\triangle f(\eta_{i}(g)) \right]^2
\end{multline*}
By Condition (5), $\triangle f(\eta_{j}(g))-n\triangle f(\eta_{i}(g))>0$ and hence $Y_{i}(g+ij)-Y_{i}(g) \geq 0$.  Similarly, $Y_{j}(g+ij)-Y_{j}(g) \geq 0$, which implies that both nodes $i$ and $j$ may increase their profit by linking together and so graph $g$ is not stable.  This is a contradiction and so $g^c$ is the only stable graph. This completes the proof.
\end{proof}

\begin{example}
\label{ex: oligopoly complete collaboration}
We present a numerical example of Theorem \ref{thm: negative dv dt complete graph}. Let $N=5$ firms compete in an oligopoly with demand function $P=30-Q$, fixed cost $\gamma_{0}=5$, and $f(\eta_{i}(g))=\frac{1}{\eta_{i}(g)+3}$.  We want to test the stability of the complete graph $g^{c}$. Conditions (1)-(4) are easy to check:
\begin{enumerate*}
\item $\tfrac{d f}{d \eta}=\frac{-1}{(\eta_{i}(g)+3)^{2}}< 0$, which implies $f$ is decreasing;
\item $\tfrac{d^{2} f}{d \eta^{2}}=\frac{2}{(\eta_{i}(g)+3)^{3}}>0$, which implies that $f$ is convex;
\item $f(\eta_{i}(g))=\frac{1}{\eta_{i}(g)+3}>0$, which implies $f$ is positive;
\item $\alpha - \gamma_{0}=30-5=25 > n f(0)=5\left(\frac{1}{3}\right)$
\end{enumerate*}
Condition (5) can be verified with the below table:
\begin{center}
\begin{tabular}{|c|c|c|c|c|c|}
  \hline
$\eta_{i}(g)$ & $\eta_{j}(g)$ & $\triangle f(\eta_{i}(g))$ & $\triangle f(\eta_{j}(g))$ & $n\triangle f(\eta_{i}(g))-\triangle f(\eta_{j}(g))$& $n\triangle f(\eta_{j}(g))-\triangle f(\eta_{i}(g))$\\
\hline
0 & 4 & -0.08 & -0.02 & 0.0059 & 0.4\\
\hline
4 & 0 & -0.02 & -0.08 & 0.3988 & 0.01\\
\hline
\end{tabular}
\end{center}
All conditions of Theorem \ref{thm: negative dv dt complete graph} are met. The stability of the complete graph can be verified exhaustively and was done using Matlab.
\end{example}

\begin{remark} The functions described in Theorem \ref{thm: negative dv dt complete graph} generalizes the result of Goyal and Joshi \cite{goyal2006}, since it is clear that a linear function of the vertex degrees (as assumed in \cite{goyal2006}) will satisfy the given criteria. Furthermore, Theorem \ref{thm: stable assym k distn graph} is a further generalization since we define conditions under which a collaboration graph with an \textit{arbitrary} degree distribution will be stable. This is not possible with Goyal and Joshi's model.
\end{remark}

\section{Conclusions}\label{sec:Conclusion}
In this paper we showed that networks with specific structure properties may form as a result of game theoretic interactions.  Specifically, we showed a simple way of constructing a game whose pairwise stable solutions are those graphs with a given degree distribution. As a particular application of network formation via game theoretic principles, we investigated the formation of collaboration networks in oligopolistic competition.  We extended the model of Goyal and Joshi \cite{goyal2003} with a nonlinear cost function that, under particular conditions, admits stable collaboration graphs with an arbitrary degree sequence as well. One limitation of this approach is that we cannot specify an exact graph structure. The degree distribution specification generates a class of stable graphs rather than a single graph. We do not view this as a problem since it can help explain variation in observed situations with the same parameters.

\subsection{Potential Relation to Network Science}
Special structure in networks has been considered in several recent papers \cite{barabasi1999a, newman2003, dorogovtsev2002, albert2002}, that have cut across various subjects including social networks, information networks, and biological networks as well as physical networks such as power grids and road networks.  A review of networks in these disciplines may be found in various articles (see \cite{newman2003, dorogovtsev2002, albert2002} and the references therein).

The network science literature was largely inspired by the observation of macroscopic structural properties (e.g., small world, power law degree distribution) of networks that occurred in several distinct network types (e.g., social, information, biological networks), which have diverse microscopic properties. The network science literature has largely been devoted to finding the mechanisms by which networks form and/or evolve in order to generate the structural properties that are observed.  The momentum in this direction has largely been driven by the statistical physics community \cite{barabasi1999a,newman2003,dorogovtsev2002}, who argue that the phenomena of complex networks (e.g., power laws) may be explained by laws that reach across all complex networks because they are phenomena that are inherent to the complexity of the networks.

Alternatively, there has been recent interest in the structural properties of networks that have been designed via optimization \cite{Alderson2008, Nagurney2008, ADGW03}.  This perspective is motivated largely by the fact that many networks (in the abstract sense) are models for physical networks that are designed by humans to function with particular objectives (or even designed by  nature to serve an evolutionary purpose). These networks are distinctly different from social networks, which are abstract models that describe interactions between actors (e.g., people talking, writing scientific papers or dating). These networks (e.g., power grids, communication networks) are not designed through central coordination, but arise as a result of the objectives of multiple independent actors.  As a result there are structural properties that often exist in these networks that are not explained by models that do not account for these functioning characteristics \cite{doyle2005}.

The work presented in this paper is motivated by this new work. Our perspective is that some networks are formed as a result of the interacting strategies of multiple players, rather than a universal rule for network formation. This is consistent with certain recent observations on (e.g.) power law networks \cite{SP12}. This occurs in collaboration networks.  The formation of the collaboration network results from the strategic decisions of the players. The network will still have particular structural properties that are explained via game theoretic principles. In this paper, we illustrated that an important property to the Network Science community, namely the degree distribution, can emerge as a result of designed game mechanism. As an explanatory model, we suggest that certain networks with \textit{interesting} degree distributions form as a result of strategic player interactions in which each node degree in embedded (or hidden) in the objective function of the node player. The formulation in this paper is simply an explicit representation. In \cite{LGF11e} we illustrate a class of games that generate specific degree distributions in which the degree is not an explicit part of the objective functions of the players.

\section{Future Directions}
\label{sec: Future Directions}
There are several directions this research could take. In \cite{LGF11e} we begin the investigation of graph formation games with specific link bias and identify a game theoretic mechanism that does not explicitly encode the degree distribution of interest into the players' objective functions and yet is still capable of having graphs with arbitrary degree distributions as stable solutions. This work is extended in \cite{LGF12a}. We also investigate the graph formation game in the presence of spatial oligopolies in \cite{LGF11c}. Clearly investigating the mechanism design problem for additional graph properties, such as the clustering coefficient is of interest. Recent work on generation of graphs with specific clustering coefficients \cite{GRKF11} may provide insight into game theoretic mechanisms for solving the same problem.

Another equally interesting future direction lies in the extraction of game theoretic mechanisms from real-world data. Solving such a problem may take two forms. In the first form, small real world human networks are evaluated using traditional psycho-social interviewing techniques in an attempt to identify objective functions or constraints consistent with the model presented in this paper and theoretical extensions (e.g., \cite{LGF12a}). This work is planned in collaboration with social psychologists at the authors' parent institution. A second form of objective function inference would be completely observational in which, given a network that appeared to be stable, one would attempt to infer the set of potential objective functions (and player constraints \cite{LGF11e}) that would generate the given stable graph. Observation of network evolution would also be useful. In this case, statistical techniques would have to be applied to the determination of the objective function structure.

A final direction of investigation lies in the evaluation of dynamic network formation. That is, investigating this problem in a dynamic context in which the objective functions of players may change. We propose that this problem can be analyzed in discrete time using techniques from competitive Markov decision theory \cite{CMDP}. However, the combinatorial properties of the graph structures (for all but the simplest networks) make a brute force analysis approach intractable. Thus a more sophisticated analysis technique may be necessary to obtain any useful results.

\bibliographystyle{alpha}
\bibliography{Biblio-Database2}

\end{document}